\DeclareMathOperator{\sign}{sign}
\newtheorem{theorem}{Theorem}
\newtheorem{definition}{Definition}
\newtheorem{observation}{Observation}
\def\b{\ensuremath\mathbf}
\title{Solving nonlinear ordinary differential equations using the invariant manifolds and Koopman eigenfunctions}
\author{Megan Morrison$^*$ and J. Nathan Kutz$^\dag$\\[.1in]
$^*$ Courant Institute of Mathematical Sciences, New York University, New York, NY 10012\\
$^\dag$ Department of Applied Mathematics, University of Washington, Seattle, WA 98195-3925}
\date{March 2022}
\begin{document}

\maketitle
\begin{abstract}
   Nonlinear ordinary differential equations can rarely be solved  analytically. Koopman operator theory provides a way to solve nonlinear systems by mapping nonlinear dynamics to a linear space using eigenfunctions. Unfortunately, finding such eigenfunctions is difficult. We introduce a method for constructing eigenfunctions from a nonlinear ODE's invariant manifolds.  This method, when successful, allows us to find analytical solutions for constant coefficient nonlinear systems.
   Previous data-driven methods have used Koopman theory to construct local Koopman eigenfunction approximations valid in different regions of phase space; our method finds analytic Koopman eigenfunctions that are exact and globally valid. 
   We demonstrate our Koopman method of solving nonlinear systems on 1-dimensional and 2-dimensional ODEs. The nonlinear examples considered have simple expressions for their invariant manifolds which produce tractable analytical solutions.
   Thus our method allows for the construction of analytical solutions for previously unsolved ordinary differential equations.  It also highlights the connection between invariant manifolds and eigenfunctions in nonlinear ordinary differential equations and presents avenues for extending this method to solve more nonlinear systems.
\end{abstract}


\section{Introduction}

Aside from a few special cases, there are no general methods for solving nonlinear ordinary differential equations, thus necessitating numerical solution techniques  \cite{kutz_data-driven_2013, vidyasagar_nonlinear_2002}.
In contrast, autonomous linear ordinary differential equations are very well understood, with analytical solutions easily constructed from exponential solution forms, i.e. eigen-decompositions~ \cite{boyce2021elementary,strogatz_nonlinear_2016, sontag_mathematical_2013}.
Most of the analysis, prediction, and control of nonlinear systems, in fact, relies on linearization around the fixed points of a given system~\cite{boyce2021elementary} and numerical methods \cite{strogatz_nonlinear_2016, kutz_data-driven_2013, vidyasagar_nonlinear_2002, morrison_nonlinear_2021-1}. 
%
Indeed, the approximation of nonlinear systems in terms of local linear systems is one of the few general methods available for characterizing nonlinear systems.
However, linear analysis and numerical methods for nonlinear systems have limits in terms of their ability to predict and control dynamics.  This motivates our introduction of a principled approach for the construction of analytically tractable invariant manifolds which can be used for expressing solutions of nonlinear differential equations. 

In 1931 B.O. Koopman found that nonlinear Hamiltonian systems could be mapped to an infinite dimensional space of observables with linear dynamics \cite{koopman_hamiltonian_1931, koopman_dynamical_1932}. 
%
Thus, if one finds such a mapping, the dynamical system can be solved in the linear space and the solution can be mapped back to the original nonlinear space.
In some instances, a finite number of Koopman eigenfunctions will form a closed, Koopman-invariant subspace, resulting in a finite linear model which can be easily solved, unlike infinite-dimensional linear models \cite{brunton_koopman_2016}.
The existence and uniqueness of global Koopman eigenfunctions has been proven for stable fixed points and periodic orbits \cite{kvalheim_existence_2021}.
Koopman operator theory provides a way to find analytical solutions of previously unsolved nonlinear systems~\cite{mezic_spectral_2005}, yet finding the eigenfunctions necessary to construct solutions can be very difficult \cite{brunton_koopman_2016, bollt_matching_2018}.
When finding exact analytical solutions is impossible, Koopman operator theory still provides a way to analyze, predict, and control nonlinear systems better than what is possible using linear analysis \cite{brunton_koopman_2016, kaiser_data-driven_2021, korda_optimal_2020}.
Data-driven methods such as {\em dynamic mode decomposition} DMD~\cite{schmid2010dynamic,rowley2009spectral,kutz_dynamic_2016,askham2018variable} and {\em extended dynamic mode decomposition} EDMD\cite{williams_data-driven_2015, williams_kernel-based_2015, kutz_dynamic_2016, arbabi_ergodic_2017} have provided a method for finding approximations to eigenfunctions that map nonlinear dynamics to linear dynamics. These mappings have been especially useful for applications in control \cite{kaiser_data-driven_2021, korda_optimal_2020, proctor_generalizing_2018}.
Other approaches have divided the domain of nonlinear ODEs into separate ``basins of attraction" where separate linearization transforms are constructed for different regions \cite{lan_linearization_2013}.
Koopman methods can also be useful for performing phase-amplitude reductions for nonlinear systems containing limit cycle oscillators \cite{wilson_isostable_2019, wilson_phase-amplitude_2020}.

In this work, we develop a principled method for finding sets of eigenfunctions that map nonlinear dynamics to finite linear systems; these eigenfunctions allow us to construct solutions to nonlinear ordinary differential equations. Unlike the data-driven approaches which create approximate eigenfunctions by extending the linearization around fixed points, we construct exact, global eigenfunctions from invariant manifolds in the system. 
The manifolds have simple analytic expressions which allow us to construct analytical solutions
to the nonlinear system from these eigenfunctions. 
%
%
Our method is shown in 
Fig.~\ref{fig:intro_fig} where a nonlinear ODE has eigenfunctions that can be constructed from the system's invariant manifolds. The nonlinear system has three invariant manifolds (red lines, Fig~\ref{fig:intro_fig}(a)) which are used to construct two globally valid eigenfunctions $\varphi_1(x,y)$ and $\varphi_2(x,y)$, both which have linear dynamics (Fig.~\ref{fig:intro_fig}(b)). The solutions in the linear $(\varphi_1, \varphi_2)$ space are mapped back to the original $(x,y)$ space, resulting in exact analytical solutions for the nonlinear ODE (Fig.~\ref{fig:intro_fig}(c)).  This example highlights how a principled algorithmic approach can be used to determine solutions to nonlinear systems of differential equations.

The manuscript is organized as follows.
Section~\ref{sec:background} provides an overview of Koopman theory, eigenfunctions, and the invariant manifolds of nonlinear ODEs. Section~\ref{sec:construct_eigs} states the conditions under which eigenfunctions can be constructed from invariant manifolds and formulas for the eigenvalue-eigenfunction pairs. This section also considers the conditions necessary to obtain analytical solutions from sets of eigenfunctions.
Section~\ref{sec:1D_examples} outlines the Koopman eigenfunction approach to solving 1-dimensional ODEs and provides several examples. Section~\ref{sec:2D_examples} outlines the Koopman eigenfunction approach to solving 2-dimension ODEs and demonstrates the method with multiple examples. Section~\ref{sec:discussion} discusses the method's limitations, possible extensions, and ramifications for data-driven discovery of eigenfunctions. Section~\ref{sec:conclusion} concludes the manuscript.

\begin{figure}[t]
    \centering
    \includegraphics[width=0.9\linewidth]{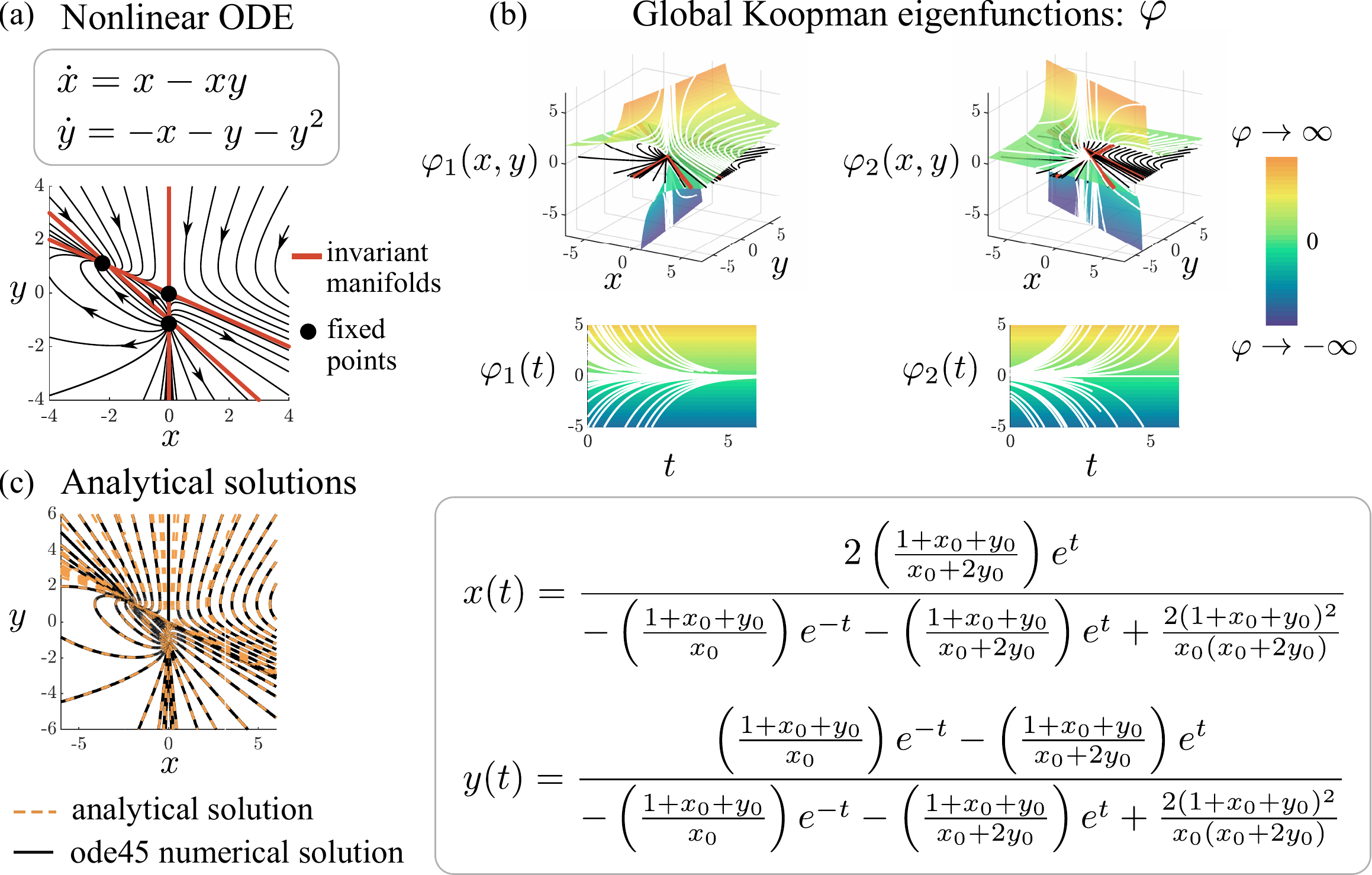}
    \caption{(a) Nonlinear ODE with multiple fixed points and real invariant manifolds (red) (b) Global Koopman eigenfunctions have linear dynamics.  (c) Analytical solution constructed from the global Koopman eigenfunctions}
    \label{fig:intro_fig}
\end{figure}

\section{Background}\label{sec:background}


We consider how to construct solutions to nonlinear, autonomous, ordinary differential equations by using Koopman theory to map nonlinear dynamics to a space that has linear dynamics. We produce the nonlinear-to-linear mapping with Koopman eigenfunctions that we construct from the nonlinear ODE's invariant manifolds.

\subsection{Koopman theory}

Consider the ordinary differential equation
\begin{align}
    \frac{d\b{x}}{dt} = F(\b{x}), \quad \b{x} \in \mathds{R}^n, \label{eq:general_ode}
\end{align}
with the autonomous vector field $F: \mathds{R}^n \rightarrow \mathds{R}^n$ that operates on the state vector $\b{x}$.
The flow associated with Eq.~\ref{eq:general_ode} for each $t \in \mathds{R}$ is the function $\b{x}(t) := S_t(\b{x}_0)$ for a trajectory starting at $\b{x}(0) = \b{x}_0 \in \mathds{R}^n$.
The Koopman operator describes the dynamics of ``observables" or measurements of the state vector along its flow \cite{koopman_hamiltonian_1931,mezic_spectral_2005, budisic_applied_2012, bollt_geometric_2021}. The observable measurements $g: \mathds{R}^n \rightarrow \mathds{C}$ are elements of a space of observable functions $\mathcal{F}$.
The Koopman operator $\mathcal{K}_t: \mathcal{F} \rightarrow \mathcal{F}$ is an infinite-dimensional linear operator that propogates observables $g$ of the state vector $\b{x}$ forward in time along trajectories of Eq.~\ref{eq:general_ode},
\begin{align}
    \mathcal{K}_t [g](\b{x}) = g \circ S_t(\b{x}). \label{eq:koopman}
\end{align}
The left-hand side of Eq.~\ref{eq:koopman} states that the Koopman operator $\mathcal{K}_t$ pushes the observables $g$ of the state vector $\b{x}$ forward in time $t$. The resulting value is equivalent to the right-hand side of the equation which states that the original variable $\b{x}$ is pushed forward in time  $t$ according to flow $S_t$ of Eq.~\ref{eq:general_ode} and then observed by $g$. Figure~\ref{fig:into_Koopman}(a) illustrates the Koopman operator pushing observables $g$ forward in time and how the result is equivalent to taking measurements of $\b{x}$ along its flow.

\begin{figure}[t]
    \centering
    \includegraphics[width=0.9\linewidth]{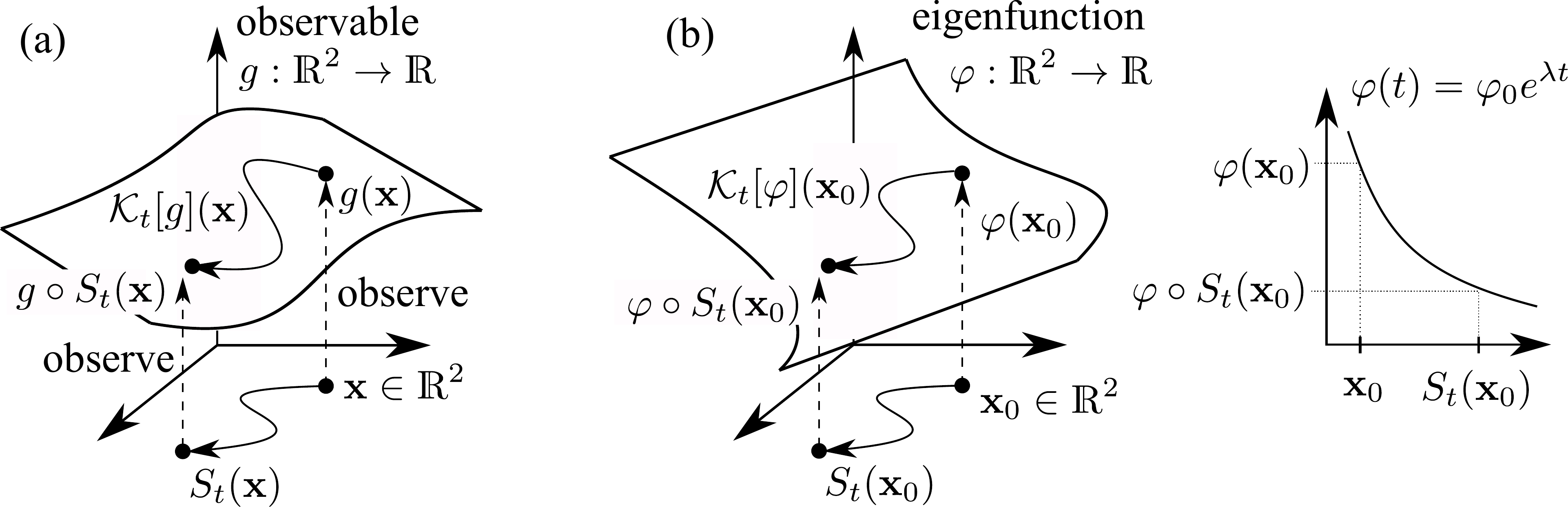}
    \caption{(a) The Koopman operator $\mathcal{K}_t$ pushes  observables $g$ forward in time.  (b) Eigenfunctions $\varphi$ are special observables with linear dynamics, $\dot{\varphi} = \lambda \varphi$.}
    \label{fig:into_Koopman}
\end{figure}

Eigenfunctions $\varphi \in \mathcal{F}$ of the Koopman operator are special observables that have exponential time dependence \cite{bollt_geometric_2021}.  An eigenvalue-eigenfunction pair ($\lambda, \varphi$) of $\mathcal{K}_t$ satisfies the equation
\begin{align}
    \mathcal{K}_t[\varphi](\b{x}) = \varphi(\b{x}) e^{\lambda t}.
\end{align}
$\varphi(t)$ grows exponentially with eigenvalue (growth constant) $\lambda$ and initial condition $\varphi(\b{x})$ under the operator $\mathcal{K}_t$.
Eigenfunctions have linear dynamics and satisfy the following equations:
\begin{align}
    \frac{d}{dt}\varphi(\b{x}) &= \lambda \varphi(\b{x}) \label{eq:lin_dyn_varphi}\\
        \nabla_{\b{x}} \varphi(\b{x})  \cdot  \frac{d \b{x}}{dt} &= \lambda \varphi(\b{x}) \nonumber \\
    \nabla_{\b{x}} \varphi(\b{x})  \cdot  F(\b{x}) &= \lambda \varphi(\b{x}). \label{eq:lin_dyn_varphi_pde}
\end{align}
Solutions to this linear first-order partial differential equation are eigenfunctions of $\mathcal{K}_t$ \cite{bollt_geometric_2021, bollt_matching_2018}. Figure~\ref{fig:into_Koopman}(b) illustrates that eigenfunctions under the Koopman operator have linear dynamics with the closed-form solution 
\begin{align}
    \varphi(t,\b{x}_0) = \varphi(\b{x}_0)e^{\lambda t}. \label{eq:varphi_t}
\end{align}
Eigenfunctions are extremely useful as they can be used to construct analytical solutions for nonlinear ODEs.
There is no known method for finding analytical solutions for most nonlinear ODEs; finding eigenfunctions for such ODEs is a promising method for obtaining solutions. 
Unfortunately, despite the growing interest and usefulness of Koopman eigenfunctions, there are few methods available to discover explicit, closed-form expressions for Koopman eigenfunctions \cite{bollt_matching_2018,kaiser_data-driven_2021,bollt_geometric_2021,page_koopman_2019}.

We show that, when certain conditions are met, closed form expressions for Koopman eigenfunctions can be constructed from invariant manifold generating functions. These are the first examples of combining multiple invariant manifolds to construct eigenfunctions that are rational expressions. These are also among the first examples of finding eigenfunctions for nonlinear ODEs that have multiple fixed points and that are not Hamiltonian.

Another way in which we deviate from previous methods is that upon finding eigenfunctions, we do not construct solutions for $\b{x}(t)$ by using a linear combination of eigenfunctions. Rather the closed-form solution for $\b{x}(t)$ is constructed using a \textit{nonlinear} combination of eigenfunctions.
This method allows for closed-form analytical solutions for $\b{x}(t)$ that were previously not possible.

\subsection{Invariant manifolds of ordinary differential equations}

Our method for obtaining eigenfunctions for 2D nonlinear ODEs requires finding invariant manifolds.
An invariant manifold of a 2-dimensional ODE, $\dot{\b{x}} = F(\b{x})$, $\b{x} \in \mathds{R}^2$, can be defined by an implicit function $\Lambda = \{(x,y): M(x,y)=0 \}$. 
The invariant manifold $\Lambda$ is defined using the invariant manifold generating function $M(x,y)$ which has a zero level-set along the invariant manifold. 
The invariant manifold generating functions ($M$-functions) of a 2-dimensional ODE are the building blocks for constructing eigenfunctions.
An invariant manifold of a dynamical system is a topological manifold that is invariant under the actions of the dynamical system \cite{chicone_ordinary_2006,wiggins_introduction_2003,vidyasagar_nonlinear_2002}.

\begin{definition}[Invariant set]
  A set of states $\Lambda \subseteq \mathds{R}^n$ of Eq.~\ref{eq:general_ode} is called an invariant set of Eq.~\ref{eq:general_ode} if for all $\b{x}_0 \in \Lambda$, and for all $t>0$, $\b{x}(t) \in \Lambda$.
\end{definition}

Consider an invariant set defined by the curve $M(x,y)=0$. The flow $\b{x}(t) = S_t(\b{x}_0)$ will stay on the curve $M(x,y)=0$ for all time if the initial condition $\b{x}_0$ is on the curve.
Invariant sets are often defined as emanating from equilibrium points. An equilibrium point $\mathcal{P} \in \mathds{R}^n$ of Eq.~\ref{eq:general_ode} is a point such that $F(\mathcal{P}) = 0$. A stable manifold of $\mathcal{P}$ is the set of points that is attracted to $\mathcal{P}$ forward in time while an unstable manifold of $\mathcal{P}$ is the set of points that is attracted to $\mathcal{P}$ backward in time \cite{wiggins_introduction_2003, homburg_invariant_2006,palis_hyperbolicity_1995}.
\begin{definition}[Stable manifold of an equilibrium point]
    The stable manifold of an equilibrium point $\mathcal{P}$ of Eq.~\ref{eq:general_ode} is defined as $$W^S(\mathcal{P}) = \{ \b{x}: S_t(\b{x}) \rightarrow \mathcal{P} \ \ as \ \ t \rightarrow \infty \}$$
\end{definition}
\begin{definition}[Unstable manifold of an equilibrium point]
    The unstable manifold of an equilibrium point $\mathcal{P}$ of Eq.~\ref{eq:general_ode} is defined as $$W^U(\mathcal{P}) = \{ \b{x}: S_t(\b{x}) \rightarrow \mathcal{P} \ \ as \ \ t \rightarrow -\infty \}$$
\end{definition}
The eigenvectors of a system linearized around a fixed point are tangent to the system's stable and unstable manifolds \cite{wiggins_introduction_2003, homburg_computation_1995}; $E^S(\mathcal{P})$ is tangent to $W^S(\mathcal{P})$ and  $E^U(\mathcal{P})$ is tangent to $W^U(\mathcal{P})$ at $\mathcal{P}$. For 2-dimensional nonlinear ODEs, we can use the fixed points $\mathcal{P}$ and the eigenvector directions, $E^S(\mathcal{P})$ and $E^U(\mathcal{P})$, to begin solving for the closed form expressions, $M(x,y) = 0$, that describe the invariant manifolds of the system \cite{roberts_utility_1989, homburg_computation_1995}.
Figure~\ref{fig:into_manifold} illustrates invariant manifolds emanating from a saddle fixed point of a system in $\mathds{R}^2$. The invariant manifold $\Lambda_1$ is the set of points $\b{x}$ that satisfy $M_1(\b{x})=0$ (Fig.~\ref{fig:into_manifold}(a)).
$\Lambda_1$ is a stable manifold of $\mathcal{P}$ since $\forall \b{x} \in \Lambda_1$  $\lim_{t \rightarrow \infty} S_t(\b{x}) = \mathcal{P}$. The invariant manifold $\Lambda_2$ is the set of points $\b{x}$ that satisfy $M_2(\b{x})=0$. $\Lambda_2$ is an unstable manifold of $\mathcal{P}$ since $\forall \b{x} \in \Lambda_2$  $\lim_{t \rightarrow -\infty} S_t(\b{x}) = \mathcal{P}$. Figure~\ref{fig:into_manifold}(b) illustrates how eigenvectors $E^S(\mathcal{P})$ and $E^U(\mathcal{P})$ of the system linearized at equilibrium point $\mathcal{P}$ are tangent to the stable and unstable manifolds at $\mathcal{P}$. This information is useful when solving for the manifold generating functions $M(\b{x})$ whose zero level-set curves implicitly define the invariant manifolds.

\begin{figure}[t]
    \centering
    \includegraphics[width=0.9\linewidth]{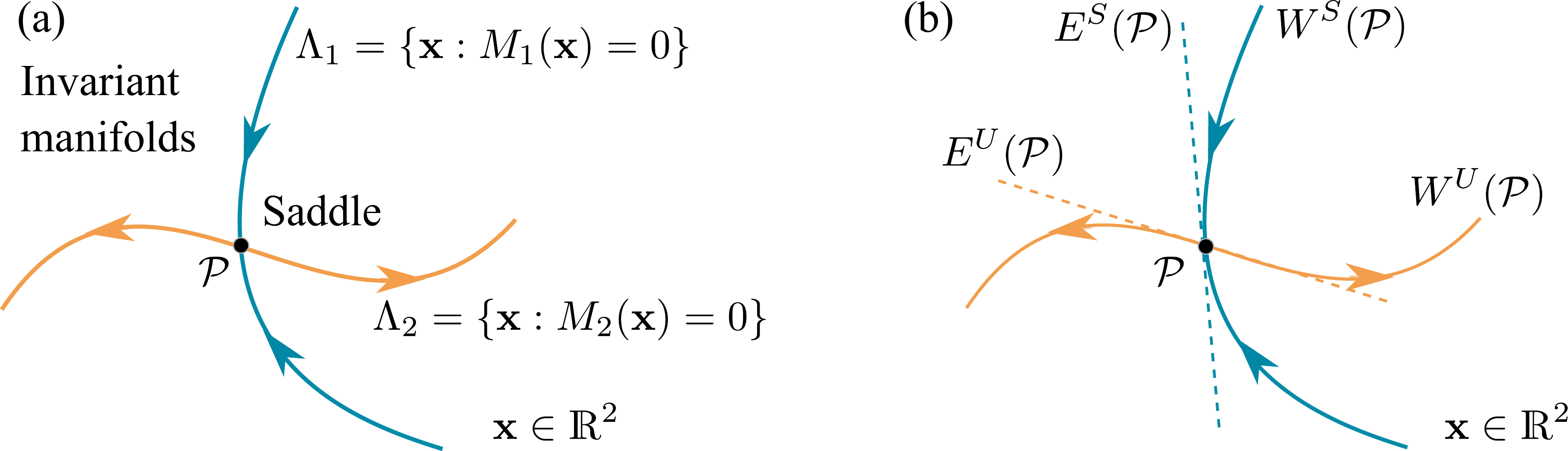}
    \caption{(a) Invariant sets $\Lambda_1$ and $\Lambda_2$ defined by curves $M_1(\b{x})=0$ and $M_2(\b{x})=0$ emanate from the saddle fixed point $\mathcal{P}$. (b) $\Lambda_1=W^S(\mathcal{P})$ is a stable manifold of $\mathcal{P}$, and $\Lambda_2=W^U(\mathcal{P})$ is an unstable manifold of $\mathcal{P}$. Eigenvector directions $E^S(\mathcal{P})$ and  $E^U(\mathcal{P})$ are tangent to the stable and unstable manifolds.}
    \label{fig:into_manifold}
\end{figure}

The $M$-functions of a 2-dimensional ODE have corresponding $N$-functions that tell us how the $M$-functions must be combined in order to generate eigenfunctions for the system. We solve for these $N$-functions by differentiating $M(\b{x})$.
The level-set $M(\b{x}) = 0$ of the surface $M(\b{x})$ is an invariant manifold of the differential equation $\dot{\b{x}} = F(\b{x})$, $\b{x} \in \mathds{R}^2$, if
$\frac{d}{dt}M(\b{x}) = 0$ along the level set $M(\b{x}) = 0$.  Therefore, $M(\b{x}) = 0$ is an invariant manifold of $\dot{\b{x}} = F(\b{x})$ if
\begin{align}
    \frac{d}{dt}M(\b{x}) = M(\b{x})N(\b{x}) \label{eq:invar_man_req}
\end{align}
for some $N(\b{x})$. $N(\b{x})$ is the $N$-function of $M(\b{x})$.
The $M$-functions and their corresponding $N$-functions have the following interesting properties that aid us in constructing eigenfunctions.

\begin{observation}
If $\frac{d}{dt}M_1(\b{x}) = M_1(\b{x}) N_1(\b{x})$ and $\frac{d}{dt}M_2(\b{x}) = M_2(\b{x}) N_2(\b{x})$
   then $\frac{d}{dt}(M_1(\b{x})M_2(\b{x})) = (N_1(\b{x})+N_2(\b{x}))M_1(\b{x})M_2(\b{x})$.
\end{observation}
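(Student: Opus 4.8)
The plan is to verify the claimed identity by a direct application of the product rule for differentiation along trajectories, using the two hypotheses as substitution rules. First I would write $\frac{d}{dt}\bigl(M_1(\b{x})M_2(\b{x})\bigr)$ and expand it via the Leibniz rule as $\bigl(\frac{d}{dt}M_1(\b{x})\bigr)M_2(\b{x}) + M_1(\b{x})\bigl(\frac{d}{dt}M_2(\b{x})\bigr)$; here $\frac{d}{dt}$ means the derivative along the flow of $\dot{\b{x}} = F(\b{x})$, i.e. $\frac{d}{dt}M_i(\b{x}) = \nabla_{\b{x}}M_i(\b{x})\cdot F(\b{x})$, so the ordinary product rule applies since $M_1 M_2$ is just a product of scalar functions of $\b{x}$.

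Next I would substitute the two hypotheses $\frac{d}{dt}M_1(\b{x}) = M_1(\b{x})N_1(\b{x})$ and $\frac{d}{dt}M_2(\b{x}) = M_2(\b{x})N_2(\b{x})$ into the expanded expression, obtaining $M_1(\b{x})N_1(\b{x})M_2(\b{x}) + M_1(\b{x})M_2(\b{x})N_2(\b{x})$. Then I would factor out the common factor $M_1(\b{x})M_2(\b{x})$ to arrive at $\bigl(N_1(\b{x}) + N_2(\b{x})\bigr)M_1(\b{x})M_2(\b{x})$, which is exactly the claimed formula and also exhibits $N_1 + N_2$ as the $N$-function of the product $M_1 M_2$ in the sense of Eq.~\ref{eq:invar_man_req}.

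There is essentially no obstacle here: the statement is a one-line consequence of the product rule, and the only thing to be careful about is noting that commutativity of multiplication of scalar-valued functions lets us regroup $M_1 N_1 M_2$ as $N_1 M_1 M_2$. One might optionally remark that the hypotheses only guarantee the relation $\frac{d}{dt}M_i = M_i N_i$ holds as an identity of functions (or at least on a neighborhood of the respective zero level sets), and the conclusion inherits exactly the same domain of validity, so the product $M_1 M_2 = 0$ is again an invariant set whenever each $M_i = 0$ is. If I wanted to generalize, the same argument by induction immediately gives $\frac{d}{dt}\bigl(\prod_i M_i\bigr) = \bigl(\sum_i N_i\bigr)\prod_i M_i$, and the case of a quotient $M_1/M_2$ works similarly with $N_1 - N_2$, which is presumably what the paper uses later to build rational eigenfunctions.
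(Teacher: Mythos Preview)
Your proposal is correct; the paper states this as an observation without proof, and your direct product-rule computation is exactly the intended (and only reasonable) justification. Your anticipation of the quotient case and the $k$-fold generalization also matches what the paper does next (Observation~2 and Theorem~\ref{thm:manifolds_general2}).
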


\begin{observation}
    If $\frac{d}{dt}M_1(\b{x}) = M_1(\b{x}) N_1(\b{x})$ and $\frac{d}{dt}M_2(\b{x}) = M_2(\b{x}) N_2(\b{x})$
   then $\frac{d}{dt}\left(\frac{M_1(\b{x})}{M_2(\b{x})} \right) = (N_1(\b{x})-N_2(\b{x}))\left(\frac{M_1(\b{x})}{M_2(\b{x})} \right)$.
\end{observation}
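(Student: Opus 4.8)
The plan is to establish Observation 2 directly from the quotient rule for the time derivative, with the defining relations for the $N$-functions entering as the only substitution. Throughout, $\frac{d}{dt}$ denotes the derivative along a trajectory of $\dot{\b{x}} = F(\b{x})$, so that $\frac{d}{dt}G(\b{x}) = \nabla_{\b{x}}G(\b{x})\cdot F(\b{x})$, and we work on the open set where $M_2(\b{x})\neq 0$, on which the ratio $M_1/M_2$ is defined and differentiable.

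First I would apply the quotient rule,
\[
\frac{d}{dt}\!\left(\frac{M_1(\b{x})}{M_2(\b{x})}\right)
= \frac{M_2(\b{x})\,\frac{d}{dt}M_1(\b{x}) - M_1(\b{x})\,\frac{d}{dt}M_2(\b{x})}{M_2(\b{x})^2},
\]
and then substitute the two hypotheses $\frac{d}{dt}M_1 = M_1N_1$ and $\frac{d}{dt}M_2 = M_2N_2$. The numerator collapses to $M_2M_1N_1 - M_1M_2N_2 = M_1M_2(N_1-N_2)$, and cancelling one factor of $M_2$ against the denominator leaves $\frac{M_1}{M_2}(N_1-N_2)$, which is the claimed identity.

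A slightly slicker route, which I would mention as a remark, reuses Observation 1: since $\frac{d}{dt}(1/M_2) = -M_2^{-2}\frac{d}{dt}M_2 = -M_2^{-2}(M_2N_2) = (-N_2)(1/M_2)$, the reciprocal $1/M_2$ satisfies an $N$-function relation with $N$-function $-N_2$; applying Observation 1 to the product $M_1\cdot(1/M_2)$ then yields the combined $N$-function $N_1 + (-N_2) = N_1 - N_2$.

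There is no genuine obstacle here --- the statement is an immediate consequence of the quotient rule --- and the only point that merits a word of care is the domain restriction $M_2\neq 0$, together with the observation that the resulting identity holds as an identity of functions on that open set (in particular along the invariant manifold $M_1=0$, where both sides vanish). The ``hard part'' is purely a matter of exposition: whether to present the one-line quotient-rule computation or the reduction to Observation 1. I would give the former for self-containedness and note the latter.
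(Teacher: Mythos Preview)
Your proof is correct. The paper does not actually prove this statement --- it is stated as an ``observation'' without justification --- but your quotient-rule computation is exactly the natural argument, and it matches the spirit of the paper's proof of Theorem~\ref{thm:manifolds_general}, which handles the general case $M_1^p M_2^q$ by the product rule (your observation being the special case $p=1$, $q=-1$).
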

In the next section we will use $M$-functions and $N$-functions to solve for eigenvalue-eigenfunction pairs of the Koopman operator $\mathcal{K}_t$.
We construct eigenvalues from the $N$-functions and eigenfunctions from the $M$-functions.

\section{Constructing eigenfunctions from invariant manifolds}\label{sec:construct_eigs}

Notice that if $N(\b{x})=c \in \mathds{R}$, a constant, then $M(\b{x})$ is an eigenfunction of $\dot{\b{x}}=F(\b{x})$. If $N(\b{x})$ is not a constant, then $M(\b{x})$ cannot be an eigenfunction.
Nonetheless, a combination of manifold generating functions, $M_1(\b{x})$ and $M_2(\b{x})$, may still create an eigenfunction so long as their corresponding $N$-functions can linearly combine to produce a constant.
Theorem~\ref{thm:manifolds_general} allows us to construct eigenfunctions given there exists some linear combination of $N$-functions that results in a constant.

\begin{theorem} \label{thm:manifolds_general}
Let $M_1(\b{x})$ and $M_2(\b{x})$ be invariant manifold generating functions of $\dot{\b{x}} = F(\b{x})$, $\b{x} \in \mathds{R}^n$, such that $\frac{d}{dt}M_1(\b{x}) = M_1(\b{x})N_1(\b{x})$ and $\frac{d}{dt}M_2(\b{x}) = M_2(\b{x})N_2(\b{x})$ for some functions $N_1(\b{x})$ and $N_2(\b{x})$. $\varphi(\b{x}) = M_1^p(\b{x})M_2^q(\b{x})$, $p,q \in \mathds{C}$, is an eigenfunction of $\dot{\b{x}} = F(\b{x})$ if and only if $p N_1(\b{x})+ q N_2(\b{x}) = \lambda$ for some constant $\lambda \in \mathds{C}$.
\end{theorem}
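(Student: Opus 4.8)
The plan is to verify the claim by a direct computation of $\frac{d}{dt}\varphi(\b{x})$ using the product/power structure of $\varphi$, and then invoke the characterization of eigenfunctions via Eq.~\ref{eq:lin_dyn_varphi}. First I would differentiate $\varphi(\b{x}) = M_1^p(\b{x})M_2^q(\b{x})$ along the flow. Writing $\frac{d}{dt}$ as the material derivative $\nabla_{\b{x}}(\cdot)\cdot F(\b{x})$ and applying the chain rule to the power functions, one gets
\begin{align}
    \frac{d}{dt}\varphi(\b{x}) = p\,M_1^{p-1}M_2^{q}\,\frac{d}{dt}M_1 + q\,M_1^{p}M_2^{q-1}\,\frac{d}{dt}M_2.
\end{align}
Substituting the hypotheses $\frac{d}{dt}M_i = M_i N_i$ then collapses each term, yielding $\frac{d}{dt}\varphi(\b{x}) = \bigl(pN_1(\b{x}) + qN_2(\b{x})\bigr)\,M_1^{p}M_2^{q} = \bigl(pN_1(\b{x}) + qN_2(\b{x})\bigr)\varphi(\b{x})$. (This is essentially Observations~1 and 2 extended to arbitrary complex powers, so I may just cite them or re-derive the one-line version.)

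With this identity in hand, both directions follow quickly. For the ``if'' direction: if $pN_1(\b{x}) + qN_2(\b{x}) = \lambda$ identically, then $\frac{d}{dt}\varphi(\b{x}) = \lambda\varphi(\b{x})$, which is exactly Eq.~\ref{eq:lin_dyn_varphi}, so $\varphi$ is a Koopman eigenfunction with eigenvalue $\lambda$ (equivalently, it solves the PDE Eq.~\ref{eq:lin_dyn_varphi_pde}, and has the closed form $\varphi(t,\b{x}_0) = \varphi(\b{x}_0)e^{\lambda t}$). For the ``only if'' direction: if $\varphi$ is an eigenfunction, then by definition $\frac{d}{dt}\varphi(\b{x}) = \lambda\varphi(\b{x})$ for some constant $\lambda$; comparing with the computed identity gives $\bigl(pN_1(\b{x}) + qN_2(\b{x}) - \lambda\bigr)\varphi(\b{x}) = 0$, and since $\varphi = M_1^pM_2^q$ is not identically zero (away from the manifolds it is nonzero), we can divide through to conclude $pN_1(\b{x}) + qN_2(\b{x}) = \lambda$ wherever $\varphi \neq 0$, hence everywhere by continuity.

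The step I expect to need the most care is the ``only if'' direction, specifically the cancellation of $\varphi(\b{x})$: one must argue that $M_1^p M_2^q$ does not vanish on an open set (so that the equality of functions $pN_1 + qN_2 = \lambda$ holds on a dense set and then everywhere by continuity of the $N_i$), rather than naively ``dividing by $\varphi$.'' This is where the implicit assumption that the $M_i$ are nontrivial generating functions (nonzero off their zero level sets) and that the $N_i$ are continuous gets used. Everything else is a routine chain-rule calculation plus the definition of an eigenfunction from Eq.~\ref{eq:lin_dyn_varphi}; no deeper machinery is required, which is why I would keep the proof short and lean on the earlier observations.
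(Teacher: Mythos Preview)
Your approach is correct and the core computation---applying the product and chain rules to $\varphi = M_1^p M_2^q$, substituting $\frac{d}{dt}M_i = M_i N_i$, and reading off $\frac{d}{dt}\varphi = (pN_1 + qN_2)\varphi$---is exactly what the paper does. In fact you go slightly further: the paper's proof, despite the biconditional statement, only writes out the ``if'' direction (assume $pN_1 + qN_2 = \lambda$ is constant, conclude $\varphi$ is an eigenfunction). Your treatment of the ``only if'' direction, including the care you flag about cancelling $\varphi$ on the dense set where it is nonzero and then extending by continuity of the $N_i$, fills a gap the paper leaves implicit.
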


\begin{proof}
Let $\frac{d}{dt}M_1(\b{x}) = M_1(\b{x})N_1(\b{x})$ and $\frac{d}{dt}M_2(\b{x}) = M_2(\b{x})N_2(\b{x})$ for $\dot{\b{x}} = F(\b{x})$, $\b{x} \in \mathds{R}^n$. Let $\varphi(\b{x}) =M_1^p(\b{x})M_2^q(\b{x})$ and $\lambda = p N_1(\b{x})+q N_2(\b{x})$ for some constant $\lambda \in \mathds{C}$. 
\begin{align*}
    \frac{d}{dt} \varphi(\b{x}) =& \frac{d}{dt}\left[ M_1^p(\b{x})M_2^q(\b{x}) \right]\\
    =& M_1^p(\b{x}) \frac{d}{dt} M_2^q(\b{x}) + M_2^q(\b{x}) \frac{d}{dt}M_1^p(\b{x})\\
    =& M_1^p q M_2^{q-1}\frac{d}{dt} M_2 + M_2^qp M_1^{p-1} \frac{d}{dt} M_1 \\
    =& M_1^p q M_2^{q-1} M_2 N_2 + M_2^qp M_1^{p-1} M_1 N_1\\
    =& M_1^p q M_2^{q} N_2 + M_2^q p M_1^{p} N_1\\
    =& (p N_1 + q N_2)M_1^p M_2^q\\
    =& \lambda \varphi(\b{x})
\end{align*}
Therefore, $\varphi(\b{x}) = M_1^p(\b{x})M_2^q(\b{x})$, $p,q \in \mathds{C}$, is an eigenfunction of $\dot{\b{x}} = F(\b{x})$ with corresponding eigenvalue $\lambda = p N_1(\b{x})+ q N_2(\b{x})$.
\end{proof}

Theorem~\ref{thm:manifolds_general} says that if there exists a linear combination of $N_1(\b{x})$ and $N_2(\b{x})$ that results in a constant $\lambda$, then we can construct eigenfunctions from the corresponding $M$-functions. $\lambda$ is the eigenvalue that corresponds to the resulting eigenfunction, producing the eigenvalue/eigenfunction pair $(\lambda, \varphi(\b{x}))$. We note that if there exists a $p$ and $q$ such that $p N_1(\b{x}) + q N_2(\b{x}) = \lambda$, where $\lambda$ is a constant, then $c(p N_1(\b{x}) + q N_2(\b{x})) = c \lambda$, where $c \lambda$ is also a constant for any $c \in \mathds{C}$. This tells us that from the single eigenvalue/eigenfunction pair $(\lambda, \varphi(\b{x}))$, we can generate a family of eigenvalue/eigenfunction pairs $(c \lambda, \varphi^c(\b{x}))$, where $c \in \mathds{C}.$ We observe from this eigenvalue/eigenfunction family that we may set any complex number to be the eigenvalue; however, the resulting corresponding eigenfunction may be quite complicated. Therefore, in the following examples we choose eigenvalues that will result in eigenfunctions that have simple expressions.

Theorem~\ref{thm:manifolds_general} can be extended to include more invariant manifolds than simply two. Nonlinear systems may contain more than two invariant manifolds and a linear combination of $N$-functions from more than two of these invariant manifolds may be required to produce a constant.

\begin{theorem} \label{thm:manifolds_general2}
Let $\{M_i(\b{x}) \}_i^k$ be invariant manifold generating functions of $\dot{\b{x}} = F(\b{x})$, $\b{x} \in \mathds{R}^n$, such that $\forall i$, $\frac{d}{dt}M_i(\b{x}) = M_i(\b{x})N_i(\b{x})$ for some functions $N_i(\b{x})$. $\varphi(\b{x}) = \prod_i^k M_i^{p_i}(\b{x})$, $p_i \in \mathds{C}$, is an eigenfunction of $\dot{\b{x}} = F(\b{x})$ if and only if $\sum_i^k p_i N_i(\b{x})= \lambda$ for some constant $\lambda \in \mathds{C}$.
\end{theorem}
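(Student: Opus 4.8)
The plan is to mimic the two-manifold proof of Theorem~\ref{thm:manifolds_general} essentially verbatim, now handling an arbitrary finite product. First I would take $\varphi(\b{x}) = \prod_{i=1}^k M_i^{p_i}(\b{x})$ and compute $\frac{d}{dt}\varphi$ directly. The cleanest route is logarithmic differentiation: wherever the $M_i$ are nonzero, $\frac{d}{dt}\log\varphi = \sum_{i=1}^k p_i \frac{\dot M_i}{M_i} = \sum_{i=1}^k p_i N_i(\b{x})$, using the defining relation $\dot M_i = M_i N_i$. Hence $\frac{d}{dt}\varphi = \big(\sum_{i=1}^k p_i N_i(\b{x})\big)\varphi(\b{x})$, valid as an identity of functions on the open set where all $M_i \neq 0$ and then extended by continuity. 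Equivalently, to avoid any fuss about where $M_i$ vanishes, I would give an inductive argument on $k$: the base case $k=1$ (or $k=2$) is Theorem~\ref{thm:manifolds_general}; for the inductive step, write $\varphi = \big(\prod_{i=1}^{k-1} M_i^{p_i}\big) M_k^{p_k}$, observe by Observation~2 / the product rule that the first factor is a manifold-type function with ``$N$-function'' $\sum_{i=1}^{k-1} p_i N_i$ in the generalized sense $\frac{d}{dt}\big(\prod_{i=1}^{k-1} M_i^{p_i}\big) = \big(\sum_{i=1}^{k-1} p_i N_i\big)\prod_{i=1}^{k-1} M_i^{p_i}$, and apply the two-factor computation once more.

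Once $\frac{d}{dt}\varphi(\b{x}) = \big(\sum_{i=1}^k p_i N_i(\b{x})\big)\varphi(\b{x})$ is established, the ``if'' direction is immediate: if $\sum_{i=1}^k p_i N_i(\b{x}) = \lambda$ for a constant $\lambda \in \mathds{C}$, then $\frac{d}{dt}\varphi = \lambda\varphi$, which by Eq.~\ref{eq:lin_dyn_varphi} means $\varphi$ is a Koopman eigenfunction with eigenvalue $\lambda$. For the ``only if'' direction, suppose $\varphi$ is an eigenfunction, so $\frac{d}{dt}\varphi(\b{x}) = \lambda\varphi(\b{x})$ for some constant $\lambda$; comparing with the identity just derived gives $\big(\sum_{i=1}^k p_i N_i(\b{x}) - \lambda\big)\varphi(\b{x}) = 0$, and since $\varphi = \prod M_i^{p_i}$ is not identically zero (away from the manifolds $M_i = 0$ it is nonzero), we conclude $\sum_{i=1}^k p_i N_i(\b{x}) = \lambda$ on that set, hence everywhere $N_i$ and the relevant quantities are defined.

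The main obstacle, such as it is, is purely bookkeeping rather than mathematical depth: making the differentiation identity $\frac{d}{dt}\varphi = \big(\sum p_i N_i\big)\varphi$ rigorous when the $M_i$ have zeros (so $M_i^{p_i}$ with complex $p_i$ may be singular or multivalued there), and being careful that the ``only if'' conclusion $\sum p_i N_i = \lambda$ is being asserted on the natural domain where everything makes sense. I expect to handle this exactly as the paper handles it in Theorem~\ref{thm:manifolds_general} — work formally on the open dense set where all $M_i \neq 0$, treat the powers as defined there, and not belabor the degenerate locus. The induction on $k$ reduces the whole statement to the already-proven case $k=2$ plus the closure of ``generalized manifold functions'' under products (which is Observation~1 generalized), so no genuinely new idea is needed.
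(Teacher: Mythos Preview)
Your proposal is correct and takes essentially the same approach as the paper: the paper's proof is precisely the unrolled product-rule computation you describe under the inductive option, peeling off one $M_i^{p_i}$ at a time to arrive at $\frac{d}{dt}\varphi = \big(\sum_i p_i N_i\big)\varphi$. Your treatment is in fact slightly more complete, since the paper only writes out the ``if'' direction despite stating an ``if and only if'', whereas you sketch the converse as well.
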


\begin{proof}
Let $\frac{d}{dt}M_i(\b{x}) = M_i(\b{x})N_i(\b{x})$, $i \in \{1,...,k \}$, for $\dot{\b{x}} = F(\b{x})$, $\b{x} \in \mathds{R}^n$. Let $\varphi(\b{x}) =\prod_i^k M_i^{p_i}(\b{x})$ and $\lambda = \sum_i^k p_i N_i(\b{x})$ for some constant $\lambda \in \mathds{C}$. 

\begin{align*}
    \frac{d}{dt} \varphi(\b{x}) =& \frac{d}{dt}\left(\prod_{i=1}^k M_i^{p_i}(\b{x}) \right)\\
    =& \frac{d}{dt} M_1^{p_1} \left(\prod_{i=2}^k M_i^{p_i} \right)  + M_1^{p_1} \frac{d}{dt}\left(\prod_{i=2}^k M_i^{p_i} \right) \\
    =& p_1 M_1^{p_1} N_1 \left(\prod_{i=2}^k M_i^{p_i} \right)  + M_1^{p_1} \frac{d}{dt}\left(\prod_{i=2}^k M_i^{p_i} \right)\\
    =& p_1 N_1 \left(\prod_{i=1}^k M_i^{p_i} \right)  + M_1^{p_1} \frac{d}{dt}\left(\prod_{i=2}^k M_i^{p_i} \right)\\
    =& p_1 N_1 \left(\prod_{i=1}^k M_i^{p_i} \right)  + M_1^{p_1} \left( p_2 N_2 \left(\prod_{i=2}^k M_i^{p_i} \right)  + M_2^{p_2} \frac{d}{dt}\left(\prod_{i=3}^k M_i^{p_i} \right) \right)\\
    =& p_1 N_1 \left(\prod_{i=1}^k M_i^{p_i} \right)  +  p_2 N_2 \left(\prod_{i=1}^k M_i^{p_i} \right)  + M_1^{p_1} M_2^{p_2} \frac{d}{dt}\left(\prod_{i=3}^k M_i^{p_i} \right)\\
    =& \left( \sum_{i=1}^k p_i N_i \right) \left(\prod_{i=1}^k M_i^{p_i} \right) \\
    =& \left( \sum_{i=1}^k p_i N_i(\b{x}) \right) \varphi(\b{x})\\
    =& \lambda \varphi(\b{x})
\end{align*}

Therefore, $\varphi(\b{x}) = \prod_i^k M_i^{p_i}(\b{x})$ is an eigenfunction of $\dot{\b{x}} = F(\b{x})$ with corresponding eigenvalue $\lambda = \sum_i^k p_i N_i(\b{x})$.
\end{proof}

We use the previous theorems to construct eigenfunctions for 2-dimensional nonlinear ODEs by (1) finding closed-form expressions for the invariant manifolds, (2) solving for the corresponding $N$-functions, and (3) finding combinations of $N$-functions that reduce to a constant. Once we have solved for the $N$-function weights, $p_i$, we use these weights as exponents in the $M$-function product, producing an eigenfunction for the nonlinear ODE.

\subsection{Obtaining independent eigenfunctions}

To obtain a solution for an n-dimensional ODE, we must construct at least $n$ independent eigenfunctions that have different level sets so that we may have a unique mapping from the eigenfunctions back to the original variables.
To obtain solutions for 2-dimensional ODEs, we must construct at least two independent eigenfunctions, $\varphi_1(\b{x})$ and $\varphi_2(\b{x})$ that have different level sets --- that is, they cannot belong to the same ``family" or equivalence class of eigenfunctions.
An equivalence class of Koopman eigenvalue-eigenfunction pairs, $\overline{(\lambda, \varphi(\b{x}))}$, have level sets that match between functions (although the level sets need not match to the same levels). Exponentiations of an eigenfunction belong to the same equivalence class \cite{bollt_geometric_2021, budisic_applied_2012},
$$ \{(p \lambda, \varphi^p(\b{x})), p \in \mathds{R}  \} \subset \overline{(\lambda, \varphi(\b{x}))}.$$
Multiples of an eigenfunction also belong to the same equivalence class,
$$ \{(\lambda, \alpha \varphi(\b{x})), \alpha \in \mathds{R}  \} \subset \overline{(\lambda, \varphi(\b{x}))}.$$
If one of our eigenfunctions constructed from invariant manifolds is not an exponentiated multiple of the other, then the two eigenfunctions belong to different equivalence classes and we may use the pair of independent eigenfunctions to solve the ODE.
We can use the following theorem to find eigenfunctions that belong to different equivalence classes.

\begin{theorem} \label{thm:lin_independence}
Let $\lambda_1 = \sum_i^k p_i N_i(\b{x})$ and $\lambda_2 = \sum_i^k q_i N_i(\b{x})$ be the eigenvalues corresponding to eigenfunctions $\varphi_1(\b{x}) = \prod_i^k M_i^{p_i}(\b{x})$ and $\varphi_2(\b{x}) = \prod_i^k M_i^{q_i}(\b{x})$ where the weights for the linear combination of $N$-functions are the vectors $\b{p} = [p_1 \ \ p_2 \ \ ... \ \ p_k]^T$ and $\b{q} = [q_1 \ \ q_2 \ \ ... \ \ q_k]^T$.
The eigenfunctions $\varphi_1(\b{x})$ and $\varphi_2(\b{x})$ belong to different equivalence classes if $\b{p}$ and $\b{q}$ are linearly independent vectors, that is $\b{p} \neq \alpha \b{q}$ for any $\alpha \in \mathds{C}.$
\end{theorem}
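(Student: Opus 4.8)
The plan is to prove the contrapositive: if $\varphi_1(\b{x})$ and $\varphi_2(\b{x})$ belong to the \emph{same} equivalence class, then $\b{p}$ and $\b{q}$ must be linearly dependent. So suppose $\varphi_1$ and $\varphi_2$ are in the same equivalence class. Using the two generating relations for equivalence classes stated just above (exponentiation $(\lambda,\varphi) \mapsto (p\lambda, \varphi^p)$ and scalar multiplication $(\lambda,\varphi) \mapsto (\lambda, \alpha\varphi)$), membership in the same class means there exist $\alpha \in \mathds{C}$ and $r \in \mathds{C}$ such that $\varphi_2(\b{x}) = \alpha\, \varphi_1^r(\b{x})$, which in terms of the $M$-functions reads $\prod_i^k M_i^{q_i}(\b{x}) = \alpha \prod_i^k M_i^{r p_i}(\b{x})$.

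The key step is then to argue that this identity forces $q_i = r p_i$ for every $i$, i.e. $\b{q} = r\b{p}$, which is exactly linear dependence. The cleanest way is to differentiate: applying $\frac{d}{dt}$ to $\varphi_2 = \alpha \varphi_1^r$ and using Theorem~\ref{thm:manifolds_general2} gives $\lambda_2 \varphi_2 = \alpha\, r \lambda_1 \varphi_1^{r} = r\lambda_1 \varphi_2$, hence $\lambda_2 = r\lambda_1$ (since $\varphi_2 \not\equiv 0$). Substituting the $N$-function expansions, $\sum_i^k q_i N_i(\b{x}) = r \sum_i^k p_i N_i(\b{x})$ for all $\b{x}$, i.e. $\sum_i^k (q_i - r p_i) N_i(\b{x}) \equiv 0$. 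If the $M_i$ (equivalently, the associated $N_i$) are chosen to be independent — which is the standing assumption when one builds distinct eigenfunctions from distinct invariant manifolds — then all coefficients vanish, giving $\b{q} = r\b{p}$ and contradicting linear independence of $\b{p}$ and $\b{q}$. This completes the contrapositive, and hence the theorem.

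I anticipate the main obstacle is the hypothesis that is being tacitly used: the conclusion "$\sum_i (q_i - rp_i) N_i \equiv 0 \implies q_i - r p_i = 0$" requires the $N$-functions (or the $M$-functions) to be linearly/functionally independent as functions on phase space. The theorem as stated does not explicitly carry this hypothesis, so a rigorous proof must either assume it or extract it from the requirement that the $M_i$ define genuinely distinct invariant manifolds. I would therefore state this independence assumption on $\{M_i\}$ (resp. $\{N_i\}$) explicitly at the start of the proof — noting it is the natural nondegeneracy condition under which the construction is carried out in the examples — and then the argument above goes through cleanly. An alternative route that sidesteps differentiation is to argue directly from $\prod_i M_i^{q_i} = \alpha \prod_i M_i^{r p_i}$ by examining the behavior (zero/pole orders) of both sides along each invariant manifold $\Lambda_i = \{M_i = 0\}$, which again recovers $q_i = r p_i$ provided the $\Lambda_i$ are distinct; I expect the differentiation argument to be shorter and more in keeping with the paper's style.
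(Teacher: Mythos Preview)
Your contrapositive strategy is exactly the paper's approach, and your proof is in fact more careful than the paper's. The paper simply writes that if $\varphi_1$ and $\varphi_2$ are in the same equivalence class then $\prod_i M_i^{p_i} = \bigl(\prod_i M_i^{q_i}\bigr)^\alpha = \prod_i M_i^{\alpha q_i}$ for some $\alpha$, and asserts without further justification that this forces $p_i = \alpha q_i$ for all $i$; it does not pass through the $N$-functions at all, and it does not include the scalar-multiple part of the equivalence relation.

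Your route via differentiation and the $N$-functions is a mild variation rather than a genuinely different argument: both approaches reduce to ``matching exponents'' and both tacitly require an independence hypothesis (multiplicative independence of the $M_i$ in the paper's version, linear independence of the $N_i$ in yours). You are right that this hypothesis is not stated in the theorem; the paper's proof simply glosses over it. Your alternative of reading off zero/pole orders along each $\Lambda_i$ is actually the cleanest justification of the step the paper takes for granted, and is closer in spirit to what the paper writes than your $N$-function computation.
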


\begin{proof}
If $\varphi_1(\b{x})$ and $\varphi_2(\b{x})$ are in the same equivalence class then 
$$
\prod_i^k M_i^{p_i}(\b{x}) = \left(\prod_i^k M_i^{q_i}(\b{x}) \right)^\alpha = \prod_i^k M_i^{\alpha q_i}(\b{x}) \ \ \text{for some} \ \ \alpha \in \mathds{R}.
$$
This implies that $p_i = \alpha q_i$ for all $i \in \{1,...,k\}$. However, $\b{p} \neq \alpha \b{q}$ for any $\alpha$. Therefore,
$\varphi_1(\b{x}) \neq (\varphi_1(\b{x}))^\alpha$;
$\varphi_1(\b{x})$ and $\varphi_2(\b{x})$ are not in the same equivalence class.
\end{proof}

Theorem~\ref{thm:lin_independence} says that if the weighting vectors for the $N$-functions are linearly independent then the eigenfunctions constructed from the $M$-functions are independent (in different equivalence classes).


\section{Koopman eigenfunctions for 1-dimensional ODEs}\label{sec:1D_examples}

We first consider how to use a Koopman approach to solve nonlinear, autonomous, first-order ordinary differential equations. 
In the 1-dimensional case, such equations are easily solvable via separation of variables. However, we will consider the alternative, Koopman approach to solving these differential equations in order to build an intuition for the method in the 2-dimensional case, where separation of variables can no longer be used to construct a solution.
Consider a nonlinear, autonomous, first-order, ordinary differential equation
\begin{align}
    \frac{dx}{dt} = f(x), \quad x \in \mathds{C}.
\end{align}
This is a separable, first-order differential equation and so is solvable by separating the variables and then integrating,
\begin{align}
    \int \frac{dx}{f(x)} = \int dt = t+c.
\end{align}
Instead of solving this ODE directly, we can instead take the Koopman perspective and first map the nonlinear dynamics of $x$ to a space with linear dynamics, find a solution in the linear space, and then map the solution back to $x$.
We solve for the eigenfunction $\varphi: \mathds{C} \rightarrow \mathds{C}$ that maps the nonlinear dynamics to a space with linear dynamics,
\begin{align}
    \frac{d}{dt}\varphi(x) &= \frac{d}{dx}\varphi(x) \frac{dx}{dt} = \varphi'(x) f(x) = \lambda \varphi(x) \nonumber \\
    \int \frac{\varphi'(x)}{\varphi(x)}dx &= \int \frac{\lambda}{f(x)} dx \nonumber \\
    \ln[\varphi(x)]+c_1 &= \int \frac{\lambda}{f(x)} dx \nonumber \\
    \varphi(x) &= c_2 e^{\int \frac{\lambda}{f(x)} dx}.
\end{align}
If  $f(x)$ is a polynomial with simple roots, $f(x) = c\prod_{i=1}^n (x-x_i)$, where $x_i \in \mathds{C}$, then we can solve further by integrating each of the resulting fractions separately,
\begin{align*}
    \int \frac{\lambda}{f(x)}dx = \int \frac{\lambda}{c\prod_{i=1}^n (x-x_i)}dx =\int \sum_{i=1}^n \frac{p_i}{x-x_i}dx = \sum_{i=1}^n \log[(x-x_i)^{p_i}].
\end{align*}
The numerators are determined via the method of partial fractions,
\begin{align}
    p_i = \frac{\lambda}{c\prod_{j = 1:n, j \neq i} (x_i - x_j)} \label{eq:pi}.
\end{align}
Therefore the solution to the eigenfunction is
\begin{align}
    \varphi(x) = c_2 e^{\int \frac{\lambda}{f(x)}dx} = c_2 \prod_{i=1}^n (x-x_i)^{p_i}, \label{eq:final_g}
\end{align}
where $\{ x_i  \}_{i}^n$ are the simple roots of $f(x)$ and the exponents $\{ p_i  \}_{i}^n$ are the constants generated by the method of partial fractions (Eq.~\ref{eq:pi}). Notice that $\varphi(x)$ is a composition of zeros and singularities and the sign of $\lambda$ determines which $x_i$ are zeros versus singularities. 
The dynamics of $\varphi$ are linear by construction,
\begin{align}
    \varphi(t,x_0) = \varphi(x_0) e^{\lambda t}
    .
\end{align}
Lastly, we solve for $x$ as a function of $\varphi(t,x_0)$ by inverting $\varphi(x)$ (Eq.~\ref{eq:final_g}). $\varphi(x)$ is often not invertible, meaning that multiple $x$ map to a single $\varphi$ value.  Only by including knowledge of the initial condition $x_0$ can this ambiguity be resolved, allowing us to create a one-to-one mapping from $(\varphi, x_0) \mapsto x$.

\subsection{1-dimensional ODE --- example 1}

Let us first consider a nonlinear differential equation that has been used as an example in previous work on Koopman analysis \cite{kaiser_data-driven_2021, bollt_matching_2018, bollt_geometric_2021},
\begin{align}
\frac{dx}{dt} &= x^2, \quad x(0) = x_0.
\end{align}
The solution derived from separation of variables is
\begin{align}
    x(t) = \frac{x_0}{1-x_0 t}.
\end{align}
Alternatively we can solve for $x(t)$ using the Koopman approach by first mapping $x$ to a linear space.  We choose our eigenvalue to be $\lambda = -1$ and solve for the corresponding eigenfunction,
\begin{align}
    \varphi(x) &= e^{\int \frac{\lambda}{f(x)} dx} =  e^{\int \frac{-1}{x^2}dx} = e^{\frac{1}{x}} \label{eq:oneD_ex1_gx}\\
    \varphi(t) &= \varphi_0 e^{\lambda t} = e^{\frac{1}{x_0}}e^{-t} = e^{\frac{1-x_0 t}{x_0}}.
\end{align}
Solving for $x$ in terms of $\varphi$, using Eq.~\ref{eq:oneD_ex1_gx} gives us
\begin{align}
    x(t) = \frac{1}{\ln[\varphi(t)]} = \frac{1}{\ln[e^{\frac{1-x_0 t}{x_0}}]} = \frac{x_0}{1-x_0 t}.
\end{align}
Although the Koopman approach is less efficient than solving via separation of variables in the 1-dimensional case, we will use a Koopman approach to solve 2-dimensional systems which cannot be solved directly.

\subsection{1-dimensional ODE --- example 2(a)}

\begin{figure}[t]
    \centering
    \includegraphics[width=0.7\linewidth]{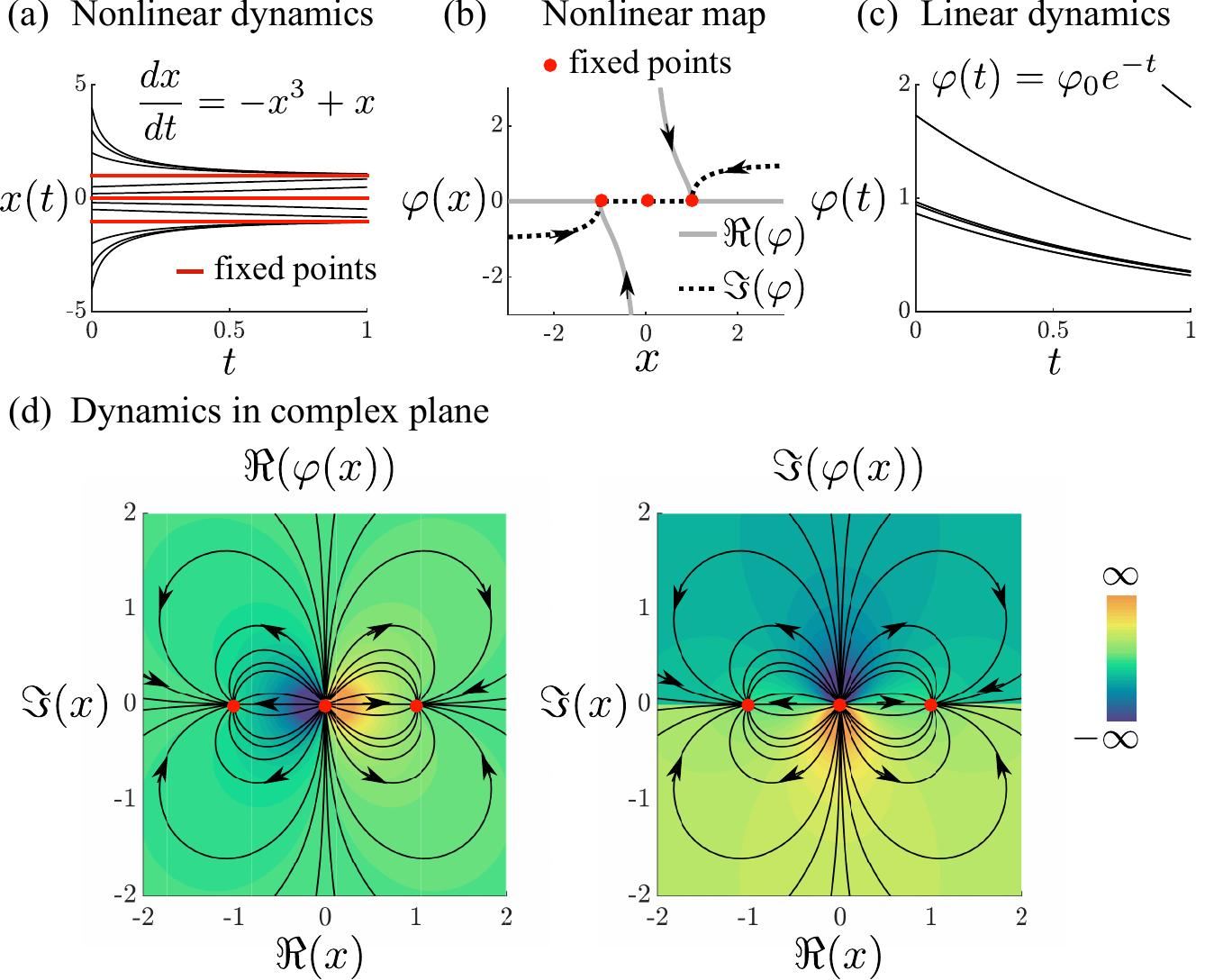}
    \caption{(a) Eq.~\ref{eq:oneD_ex2_dxdt} has nonlinear dynamics. (b) The nonlinear dynamics of $x$ can be mapped to eigenfunction $\varphi(x)$. (c) The dynamics of $\varphi$ is linear. (d) Dynamics of $x = a + ib$ viewed in the complex plane as well as mapping to complex-valued $\varphi$}
    \label{fig:oneD_ex2}
\end{figure}

Suppose we have a nonlinear differential equation of the form
\begin{align}
\frac{dx}{dt} &= -x^3+x, \quad x(0) = x_0. \label{eq:oneD_ex2_dxdt}
\end{align}
The solution can be found using separation of variables, resulting in
\begin{align}
    x(t) = \frac{\sign(x_0)e^t}{\sqrt{-1+e^{2t}+\frac{1}{x_0^2}}}. \label{eq:oneD_ex2_soln_sov}
\end{align}
Alternatively, we can also solve for $x(t)$ via the Koopman approach by first mapping $x$ to a variable that has linear dynamics, solving the linear ordinary differential equation, and then mapping the solution back to the nonlinear space.
First let us factor the right hand side of the differential equation,
\begin{align}
    \frac{dx}{dt} &= -x(x+1)(x-1).
\end{align}
The fixed points of the nonlinear ODE are used to construct the solution.
The system has two stable fixed points at $x = \pm 1$ and a source at $x=0$ (Fig.~\ref{fig:oneD_ex2}(a)). 
We can map both stable fixed points to the fixed point in the Koopman linear space by setting the eigenvalue to be $\lambda = -1$. The unstable fixed point is mapped to infinity (Fig.~\ref{fig:oneD_ex2}(b)).
We solve for the eigenfunction using the steps outlined above,

\begin{align}
    \varphi(x) &= \exp \int \frac{\lambda}{f(x)}dx \nonumber \\
    &= \exp \int \frac{-1}{-x(x^2-1)}dx \nonumber  \\
    &= \exp \left( \int \frac{-1}{x}dx + \int \frac{x}{x^2-1}dx \right) \nonumber  \\
    &= \exp [-\ln(x) + \frac{1}{2}\ln(1-x^2)] \nonumber \\
    \varphi(x) &=\frac{\sqrt{1-x^2}}{x}, \quad \lambda = -1. \label{eq:1dex2_gx}
\end{align}
The initial condition $x_0$ mapped to the eigenfunction space $\varphi(x)$ is 
\begin{align}
    \varphi_0 =\frac{\sqrt{1-x_0^2}}{x_0}.
\end{align}
The dynamics of $\varphi$ are linear (Fig.~\ref{fig:oneD_ex2}(c)). Therefore the solution for $\varphi(t)$ is
\begin{align}
    \varphi(t) = \varphi_0 e^{\lambda t} = \frac{\sqrt{1-x_0^2}}{x_0} e^{- t}.
\end{align}
Using Eq.~\ref{eq:1dex2_gx} we solve for $x$ in terms of $\varphi$,
\begin{align}
    x(t) = \frac{\sign(x_0)}{\sqrt{1+\varphi^2(t)}}= \frac{\sign(x_0)}{\sqrt{1+\frac{1-x_0^2}{x_0^2}e^{-2t}}}.
\end{align}
The Koopman-derived solution is equivalent to the solution derived using separation of variables, Eq.~\ref{eq:oneD_ex2_soln_sov}.
In a previous Koopman approach to solving Eq.~\ref{eq:oneD_ex2_dxdt}, separate linearization transforms were computed for each basin of attraction centered at each fixed point \cite{lan_linearization_2013}.
In contrast, we create a single nonlinear-to-linear mapping that is applicable to the entire domain of the nonlinear ODE.

The dynamics of nonlinear differential equations can be understood more fully by extending the dynamics into the complex plane. While all the fixed points of Eq.~\ref{eq:oneD_ex2_dxdt} are real, other differential equations have complex fixed points that impact the dynamics. Understanding the dynamics around fixed points is key to understanding the dynamics as a whole.
We allow $x$ to be a complex variable $x = a+ ib$ and solve for the dynamics of the real and complex component of $x$,
\begin{align}
\begin{split}
    \frac{da}{dt} &= a - a^3 + 3ab^2\\
    \frac{db}{dt} &= b - 3 a^2b + b^3.
\end{split}
\end{align}
The dynamics in the complex plane is an extension of the dynamics along the real line. Figure~\ref{fig:oneD_ex2}(d) shows the dynamics in the complex plane; the linear dynamics of $\varphi(x)$ hold for complex values of $x$. By slicing the dynamics where the imaginary component is zero, $b=0$, we recover the dynamics along the real line (Fig.~\ref{fig:oneD_ex2}(b)). We see that $\lim_{x \rightarrow \pm 1} \varphi(x) = 0$ and $\lim_{x \rightarrow 0} \varphi(x) = \pm \infty$.

\subsection{1-dimensional ODE --- example 2(b)}

\begin{figure}[t]
    \centering
    \includegraphics[width=0.70\linewidth]{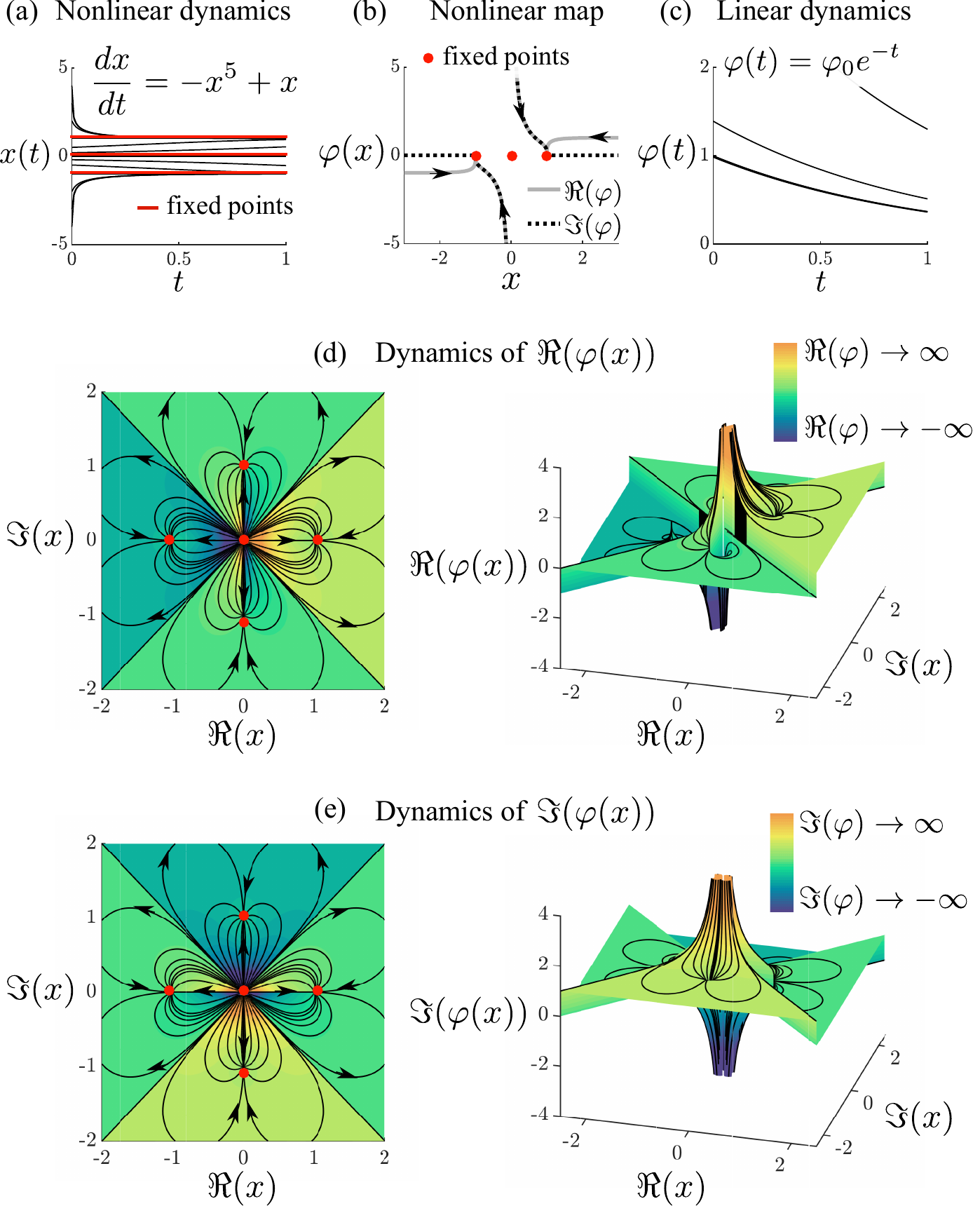}
    \caption{(a) Eq.~\ref{eq:oneD_ex22_dxdt} has nonlinear dynamics. (b) $x$ mapped to complex-valued eigenfunction $\varphi(x)$ (c) The dynamics of $\varphi$ is linear. (d-e) Dynamics of $x = a + ib$ viewed in the complex plaine as well as mapped to complex-valued $\varphi$}
    \label{fig:oneD_ex22}
\end{figure}

Now suppose we take Eq.~\ref{eq:oneD_ex2_dxdt} and add two more fixed points to the system but on the imaginary axis, still resulting in a real-valued ODE,
\begin{align}
    \frac{dx}{dt} &= -x^5 + x, \quad x(0) = x_0 \label{eq:oneD_ex22_dxdt}\\
    \frac{dx}{dt} &= (-x^3 + x)(x^2+1) \nonumber \\
    \frac{dx}{dt} &= -x(x+1)(x-1)(x+i)(x-i). \nonumber
\end{align}
We use the eigenvalue $\lambda = -1$ to map all stable fixed points, $x = \pm 1, \pm i$, to the stable fixed point in the eigenfunction space $\varphi =0$ (Fig.~\ref{fig:oneD_ex22}(a-b)). We map the unstable fixed point, $x = 0$, to unstable fixed point $\varphi = \infty$,
\begin{align}
    \varphi(x) = x^{-1}(x-1)^{\frac{1}{4}}(x+1)^{\frac{1}{4}}(x-i)^{\frac{1}{4}}(x+i)^{\frac{1}{4}} = \frac{(x^4-1)^{\frac{1}{4}}}{x}. \label{eq:1dex22_gx}
\end{align}
The initial condition $x_0$ mapped to $\varphi(x)$ is $\varphi_0 =\frac{(x_0^4-1)^\frac{1}{4}}{x_0}$.
The solution for $\varphi(t)$ is
\begin{align}
    \varphi(t) = \varphi_0 e^{\lambda t} = \frac{(x_0^4-1)^\frac{1}{4}}{x_0} e^{- t}.
\end{align}
Using Eq.~\ref{eq:1dex22_gx} we solve for $x$ in terms of $\varphi$,
\begin{align}
    x(t) = \frac{1^{\frac{1}{4}}}{(1-\varphi^4)^{\frac{1}{4}}} = \frac{1^{\frac{1}{4}}}{\left( 1-\left(\frac{(x_0^4-1)^\frac{1}{4}}{x_0} e^{- t} \right)^4 \right)^{\frac{1}{4}}}.
\end{align}
Figure~\ref{fig:oneD_ex22}(b-c) shows that the nonlinear mapping of the eigenfunction $\varphi(x)$ generates linear dynamics for $\varphi(t)$.
The dynamics when $x \in \mathds{C}$, where $x = a + ib$, can be written as
\begin{align}
\begin{split}
    \frac{da}{dt} &= a - a^5 + 10a^3b^2 - 5ab^4\\
    \frac{db}{dt} &= b - 5a^4b + 10a^2b^3-b^5.
\end{split}
\end{align}
We observe that Eq.~\ref{eq:oneD_ex22_dxdt} must map to a complex-valued eigenfunction in order to obtain linear dynamics. If $\dot{x} = f(x)$ has complex fixed points, then $\varphi(x)$ will be a complex-valued function even for real inputs $x \in \mathds{R}$. Figures~\ref{fig:oneD_ex22}(d-e) shows $x = \Re(x)+ i\Im(x)$ mapped to the real and imaginary components of the eigenfunction $\varphi(x) = \Re(\varphi) + i \Im(\varphi)$.

\subsection{1-dimensional ODE --- example 3}

Consider the following nonlinear differential equation and its factored form:
\begin{align}
\frac{dx}{dt} &= x^3+2x^2+2x, \quad x(0) = x_0 \label{eq:oneD_ex3_dxdt}\\ 
\frac{dx}{dt} &= x(x-(-1-i))(x-(-1+i)). \nonumber
\end{align}
Set $\lambda = 1$, then using Eq.~\ref{eq:final_g} we get
\begin{align}
    \varphi(x) &= x^{\frac{1}{2}} [x-(-1-i)]^{\frac{1}{-2+2i}} [x-(-1+i)]^{\frac{1}{-2-2i}} = \frac{e^{-\frac{1}{2} \arctan(1+x)} \sqrt{x}}{(2+2x+x^2)^\frac{1}{4}}. \label{eq:gx_1D_ex3} 
\end{align}
The dynamics of $\varphi$ are $\varphi(t) = \varphi_0 e^{t}$
with initial condition
\begin{align}
    \varphi_0(x_0) &= \frac{e^{-\frac{1}{2} \arctan(1+x_0)} \sqrt{x_0}}{(2+2x_0+x_0^2)^\frac{1}{4}}.
\end{align}
The mapping from $x$ to $\varphi$, Eq.~\ref{eq:gx_1D_ex3}, does not have a nice inverse; we cannot find an explicit solution for $x$, only an implicit solution.
The solution for $x(t)$ is the implicit solution to
\begin{align}
    \frac{e^{-\frac{1}{2} \arctan(1+x_0)} \sqrt{x_0}}{(2+2x_0+x_0^2)^\frac{1}{4}} e^t =\frac{e^{-\frac{1}{2} \arctan(1+x(t))} \sqrt{x(t)}}{(2+2x(t)+x(t)^2)^\frac{1}{4}}.
\end{align}
Implicit solutions can be solved using a numerical method.


\section{Koopman eigenfunctions for 2-dimensional ODEs}\label{sec:2D_examples}

We now turn our attention to solving 2-dimensional nonlinear ODEs using Algorithm~\ref{alg:koopman_method}. Not all 2-dimensional ODEs can be solved using this method.
We must have multiple real invariant manifolds and we must be able to combine the invariant manifold generating functions in such a manner that the resulting functions are eigenfunctions.
In order to solve a 2-dimensional autonomous ODE using the following method, the ODE must satisfy three requirements
\begin{enumerate}
    \item The ODE must have at least two real invariant manifolds $M_i(\b{x})$.
    \item There must exist at least two linear combinations of $N$-functions that result in constants.
    \item At least two of the weight vectors, $\b{p}_i$, corresponding to eigenfunctions $\varphi_i$, must be linearly independent; this requirement guarantees the eigenfunctions generated from the $M$-functions are from different equivalence classes.
\end{enumerate}
Once we have found a pair of independent eigenfunctions we can use Algorithm~\ref{alg:koopman_method} to solve the nonlinear ODE.

\begin{algorithm}[t]
\caption{Koopman method for solving nonlinear ODEs in $\mathds{R}^2$} \label{alg:koopman_method}
\begin{enumerate}
    \item Find independent eigenfunctions: $(x,y) \mapsto (\varphi_1(x,y),\varphi_2(x,y))$
    \item Solve eigenfunction dynamics: $\varphi(t) = \varphi_0 e^{\lambda t}$
    \item Compute initial condition: $\varphi_0 = \varphi(x_0,y_0)$
    \item Solve for original variables:  $(\varphi_1,\varphi_2) \mapsto (x(\varphi_1,\varphi_2),y(\varphi_1,\varphi_2))$
    \item Substitute eigenfunction solutions into solutions for original variables:\\
    $x(t) = x(\varphi_1(t),\varphi_2(t)) = x(\varphi_1(x_0,y_0)e^{\lambda_1 t},\varphi_2(x_0,y_0)e^{\lambda_2 t})$\\
    $y(t) = y(\varphi_1(t),\varphi_2(t)) = y(\varphi_1(x_0,y_0)e^{\lambda_1 t},\varphi_2(x_0,y_0)e^{\lambda_2 t})$
\end{enumerate}

\end{algorithm}

\subsection{Linear systems}

\begin{figure}[t]
    \centering
    \includegraphics[width=0.8\linewidth]{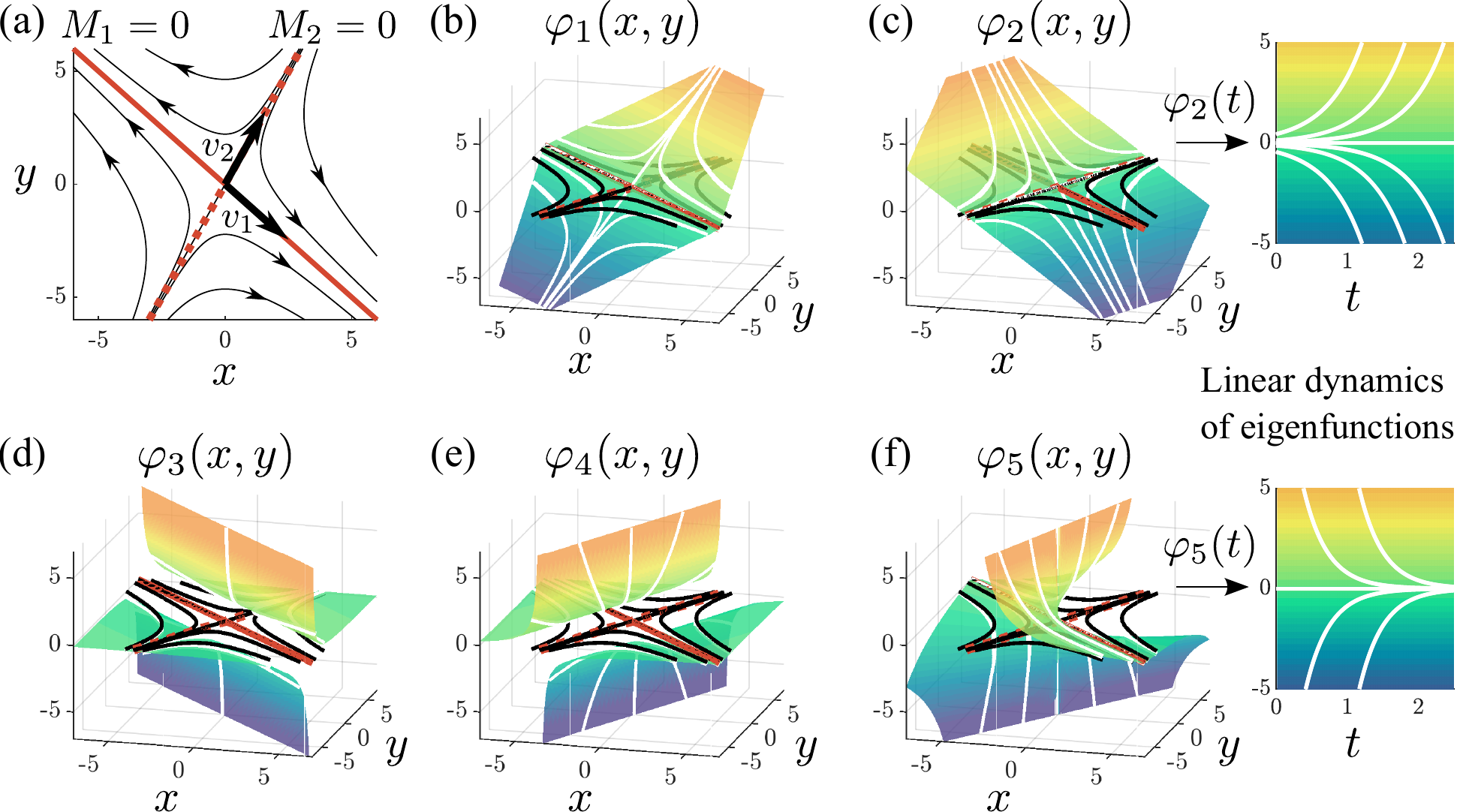}
    \caption{Linear dynamics and eigenfunctions (a) Phase plane of Eq.~\ref{eq:linear} (b) $\varphi_1(x,y)$, Eq.~\ref{eq:linear_g1}, formed from the first eigenvector (c) $\varphi_2(x,y)$, Eq.~\ref{eq:linear_g2}, formed from the second eigenvector (d-f) Additional eigenfunctions. The eigenfunctions have linear dynamics.}
    \label{fig:twoD_linear_real}
\end{figure}

Linear systems have well-known analytical solutions and are used ubiquitously in the applied sciences for prediction and control \cite{chicone_ordinary_2006,strogatz_nonlinear_2016, sontag_mathematical_2013}.
Although solutions to linear systems are well known, we will revisit the method here in a way that highlights the Koopman eigenfunctions, invariant manifolds, and their connection to the ensuing solution. Previous work has considered linear systems from a Koopman perspective \cite{budisic_applied_2012, bollt_geometric_2021, mezic_koopman_2017, mezic_spectrum_2020}; we consider linear systems here once again as an introduction for solving nonlinear systems.

The solution to a linear ordinary differential equation $\dot{\b{x}} = \b{A} \b{x}$ is $\b{x}(t) = \exp(\b{A}t) \b{x}_0$.
The solution is typically constructed by finding the eigenvalues and eigenvectors and then linearly composing them \cite{strogatz_nonlinear_2016, chicone_ordinary_2006}. We will instead use the Koopman approach, Algorithm~\ref{alg:koopman_method}, to find a solution.
Consider the 2D linear ordinary differential equation
\begin{align}
\begin{split} \label{eq:linear}
    \dot{x} &= x-y \\
    \dot{y} &= -2x
\end{split}
\end{align}
with initial conditions $x(0) = x_0$ and $y(0) = y_0$.
This system can be written as
\begin{align}
\dot{\b{x}} &= \b{A} \b{x}\\
    \begin{bmatrix}
    \dot{x}\\
    \dot{y}
    \end{bmatrix}
    &=
    \begin{bmatrix}
    1 & -1\\
    -2 & 0
    \end{bmatrix}
        \begin{bmatrix}
    x\\
    y
    \end{bmatrix}.
\end{align}
The eigenvalues and eigenvectors of $\b{A}$ are $\lambda_1 = 2$, $\lambda_2 = -1$, $v_1 = [-1 \ \ 1]^T$, and $v_2 = [1 \ \ 2]^T$. We use the eigenvectors to construct two invariant manifold generating functions, $M_1 = y+x$ and $M_2 = y-2x$ that are zero along the eigenvector directions; $\Lambda_1 = \{(x,y): M_1(x,y)=0 \}$ and $\Lambda_2 = \{(x,y): M_2(x,y)=0 \}$ are the two invariant manifolds of Eq.~\ref{eq:linear} that pass through the fixed point at the origin (Fig.~\ref{fig:twoD_linear_real}(a)). We can confirm that these are invariant manifolds of Eq.~\ref{eq:linear} by checking that they satisfy Eq.~\ref{eq:invar_man_req}, which also allows us to solve for the $N$-functions.
\begin{align*}
    \frac{d}{dt}M_1(x,y) &= \frac{d(y+x)}{dt}=-2x+x-y= (x+y)(-1)=M_1(x,y)N_1(x,y)\\
     \frac{d}{dt}M_2(x,y) &=\frac{d (y-2x)}{dt}=-2x-2(x-y)=(y-2x)2=M_2(x,y)N_2(x,y)
\end{align*}
Because $N_1$ and $N_2$ are both constants, $M_1(x,y)$ and $M_2(x,y)$ are both eigenfunctions of the linear system Eq.~\ref{eq:linear} with eigenvalues $ \lambda_1 = N_1(x,y) = -1$ and $\lambda_2 = N_2(x,y) = 2$,
\begin{alignat}{2}
    \varphi_1(x,y) &= y+x, \quad &&\lambda_1 = -1 \label{eq:linear_g1}\\
    \varphi_2(x,y) &= y-2x, \quad  &&\lambda_2 = 2. \label{eq:linear_g2}
\end{alignat}
Figure~\ref{fig:twoD_linear_real}(b-c) shows eigenfunctions $\varphi_1$ and $\varphi_2$ with the dynamics in the $(x,y)$ space projected onto the eigenfunction surfaces. The dynamics in the eigenfunction space are linear.
The weight vector of the $N$-functions for $\varphi_1$ is $\b{p}_1 = [1 \ \ 0]^T$ since $\lambda_1 = 1 N_1 + 0 N_2$, while the weight vector for $\varphi_2$ is $\b{p}_2 = [0 \ \ 1]^T$ since $\lambda_2 = 0 N_1 + 1 N_2$.
$\b{p}_1$ and $\b{p}_2$ are linearly independent; therefore, according to Theorem~\ref{thm:lin_independence}, $\varphi_1$ and $\varphi_2$ belong to different equivalence classes and can be used in conjunction to solve for $\b{x}(t)$.
We can confirm that these are in fact eigenvalue-eigenfunction pairs of Eq.~\ref{eq:linear} by checking that they satisfy Eq.~\ref{eq:lin_dyn_varphi_pde}.
\begin{align*}
    \nabla_{\b{x}} \varphi_1(\b{x})  \cdot  F(\b{x}) &=\lambda_1 \varphi_1(\b{x})\\
    \begin{bmatrix}
        1 & 1
    \end{bmatrix} \begin{bmatrix}
        x-y\\
        -2x
    \end{bmatrix}
   &=
    -1(y+x)\\
    -y-x &= -y-x\\
    &\implies  \varphi_1 \  \text{is an eigenfunction}.
\end{align*}
\begin{align*}
    \nabla_{\b{x}} \varphi_2(\b{x})  \cdot  F(\b{x}) &= \lambda_2 \varphi_2(\b{x})\\
    \begin{bmatrix}
        -2 & 1
    \end{bmatrix} \begin{bmatrix}
        x-y\\
        -2x
    \end{bmatrix}
   &=
    2(y-2x)\\
    2y-4x &= 2y-4x\\
    &\implies  \varphi_2 \  \text{is an eigenfunction}.
\end{align*}
According to Eq.~\ref{eq:varphi_t}, the eigenfunction solutions are
\begin{align}
    \varphi_1(t) &= \varphi_1(0) e^{- t} = \varphi_1(x_0,y_0) e^{- t} \label{eq:lin_varphi1}\\
     \varphi_2(t) &= \varphi_2(0) e^{2t} = \varphi_2(x_0,y_0) e^{2t}. \label{eq:lin_varphi2}
\end{align}
Eqs.~\ref{eq:linear_g1} and~\ref{eq:linear_g2} give us initial conditions in terms of $x$ and $y$, $\varphi_1(x_0,y_0) = y_0-x_0$, and $\varphi_2(x_0,y_0) = y_0-2x_0$.
Now that we have analytical solutions to the dynamics in the eigenfunction space we can map the solutions back to the original $(x,y)$ space.
We use the system of equations~\ref{eq:linear_g1} and~\ref{eq:linear_g2} to solve for $x$ and $y$ as functions of $\varphi_1$ and $\varphi_2$,
\begin{align}
    x(t) &= \varphi_1(t) - \varphi_2(t) \label{eq:linear_x(g)}\\
    y(t) &= 2\varphi_1(t) - \varphi_2(t). \label{eq:linear_y(g)}
\end{align}
We have analytical solutions for $\varphi_1$ (Eq.~\ref{eq:lin_varphi1}) and $\varphi_2$ (Eq.~\ref{eq:lin_varphi2}) and may substitute these solutions into the equations for $x$ and $y$.
With this substitution we get the analytical solution for ($x(t),y(t)$) in terms of time $t$ and the initial condition $(x_0, y_0)$,
\begin{align}
        x(t) &= (y_0-x_0)e^{2 t} - (y_0-2x_0)e^{- t}\\
    y(t) &= 2(y_0-x_0)e^{2 t} - (y_0-2x_0)e^{- t}.
\end{align}
The key to obtaining a solution was finding eigenfunctions, which have linear dynamics. Because observables with linear dynamics have known solutions, this process allows us to find an analytical solution in the ($\varphi_1(t), \varphi_2(t)$) space and then map the solution back to the ($x(t),y(t)$) space.

Although we are finished solving this problem, we will consider what other eigenfunctions we might have used, as $\varphi_1$ and $\varphi_2$ were not our only options.
Bollt (2021) demonstrates that additional eigenfunctions can be generated from primary eigenfunctions \cite{bollt_geometric_2021}; the inverse and product of eigenfunctions are also eigenfunctions.
From these properties we generate additional eigenfunctions for Eq.~\ref{eq:linear} out of eigenfunctions $\varphi_1$ and $\varphi_2$,
\begin{align}
    \varphi_3 &= \frac{1}{\varphi_1}, \quad \lambda_3 = -\lambda_1\\
    \varphi_4 &= \frac{1}{\varphi_2}, \quad \lambda_4 = -\lambda_2\\
    \varphi_5 &= \frac{\varphi_1}{\varphi_2}, \quad \lambda_5 = \lambda_1 + \frac{1}{\lambda_2}.
\end{align}
Figure~\ref{fig:twoD_linear_real} shows the phase plane of Eq.~\ref{eq:linear}, the dynamics mapped onto the eigenfunctions, and the resulting linear dynamics that occur on these special observables. $\varphi_1$ and $\varphi_2$ are both a linear mapping from the $(x,y)$ plane with a null space along $v_1$ and $v_2$ respectively. Conversely, $\varphi_3$ and $\varphi_4$ are nonlinear with respect to $x$ and $y$. Both of these functions have a discontinuity along the eigenvector directions; the eigenfunctions go to infinity as they approach the invariant manifolds. $\varphi_5$ has a null space along $v_1$ and is undefined along $v_2$ as it is composed of $\varphi_1$ in the numerator and $\varphi_2$ in the denominator. Although $\varphi_3$, $\varphi_4$, and $\varphi_5$ are all \emph{nonlinear} with respect to $x$ and $y$, they are \emph{linear} with respect to time. This means that they all display exponential growth or exponential decay as the eigenfunction observables are measured along the flow of the original variables.
We extend this process to nonlinear ODEs and use the invariant manifolds of nonlinear systems to generate eigenfunctions.


\subsection{Nonlinear example 1 --- linear invariant manifolds}

\begin{figure}[t]
    \centering
    \includegraphics[width=0.9\linewidth]{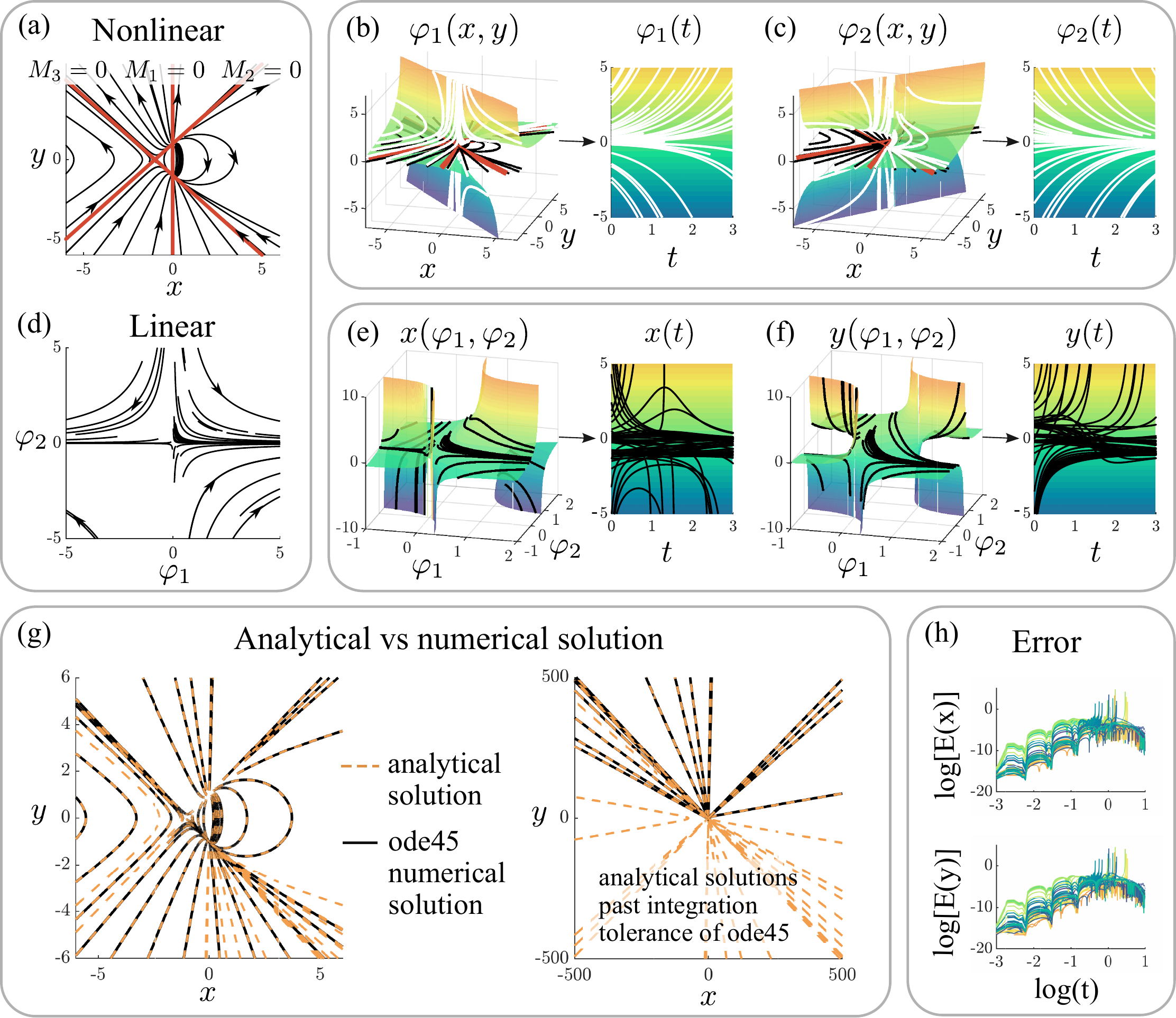}
    \caption{Dynamics in original space and eigenfunction space. (a) Phase plane for Eq.~\ref{eq:nonlinear_ex1} (b-c) Flow lines projected onto $\varphi_1$ and $\varphi_2$. (d) Phase plane of ($\varphi_1,\varphi_2$) (e-f) Flow lines of ($\varphi_1, \varphi_2$) mapped back to original variables ($x,y$) (g) Analytical solution compared to numerical solution produced using ode45. (h) Error in trajectories over time}
    \label{fig:nonlinear_ex1}
\end{figure}

We extend the method we used to solve the linear ODE to nonlinear ODEs.
Consider the nonlinear system
\begin{align}
\begin{split}\label{eq:nonlinear_ex1}
    \dot{x} &= xy\\
    \dot{y} &= y^2 -x -1
\end{split}
\end{align}
with initial conditions $x(0) = x_0$ and $y(0) = y_0$.
The fixed points of this system are at $(x,y) = (0,\pm 1), (-1,0)$.
The surfaces $M_1(x,y) = x$, $M_2(x,y) = y-x-1$, and $M_3(x,y) = y+x+1$ generate invariant manifolds $\Lambda_1 = \{(x,y): M_1(x,y)=0  \}$, $\Lambda_2 = \{(x,y): M_2(x,y)=0  \}$, and $\Lambda_3 = \{(x,y): M_3(x,y)=0  \}$ that go through the fixed points
 (Fig.~\ref{fig:nonlinear_ex1}(a)). 
We can confirm that these are indeed invariant manifolds of Eq.~\ref{eq:nonlinear_ex1} by checking that they satisfy Eq.~\ref{eq:invar_man_req} which also allows us to solve for the $N$-functions.
\begin{align*}
    \frac{d}{dt}M_1(x,y) &= \frac{dx}{dt}=xy=M_1 N_1\\
     \frac{d}{dt}M_2(x,y) &=\frac{d (y-x-1)}{dt}=y^2-x-1 - xy=(y-x-1)(y+1)=M_2 N_2\\
     \frac{d}{dt}M_3(x,y)&=\frac{d (y+x+1)}{dt} = y^2-x-1+xy=(y+x+1)(y-1) = M_3 N_3
\end{align*}
The $N$-functions corresponding to the $M$-functions are $N_1(x,y) = y$, $N_2(x,y) = y+1$, and $N_3(x,y)=y-1$. None of the $N$-functions are constants, therefore none of the $M$-functions are eigenfunctions. However, multiple linear combinations of the $N$-functions result in constants,
\begin{alignat*}{2}
    \lambda_1 = N_1(x,y) - N_3(x,y) &= y-(y-1) &&= 1\\
    \lambda_2 = N_1(x,y) - N_2(x,y) &= y-(y+1) &&= -1\\
    \lambda_3 = N_2(x,y) - N_3(x,y) &= y+1-(y-1) &&= 2.
\end{alignat*}
Therefore, according to Theorem~\ref{thm:manifolds_general}, we can construct eigenfunctions from the $M$-function quotients
\begin{align}
    \varphi_1(x,y) &= \frac{M_1}{M_3} = \frac{x}{1+x+y}, \quad \lambda_1 = 1 \label{eq:nonlin_ex1_g1}\\
    \varphi_2(x,y) & = \frac{M_1}{M_2} = \frac{x}{1+x-y},  \quad \lambda_2 = -1 \label{eq:nonlin_ex1_g2}\\
     \varphi_3(x,y) & = \frac{M_2}{M_3} = \frac{y-x-1}{y+x+1},  \quad \lambda_3 = 2.
\end{align}
The $N$-function weighting vectors for $\varphi_1$, $\varphi_2$, and $\varphi_3$ are $\b{p}_1 = [1 \ \ 0 \  -1]^T$, $\b{p}_2 = [1 \ -1 \ \ 0]^T$, and $\b{p}_3 = [0 \ \ 1 \ -1]^T$ since $\lambda_1 = 1N_1 + 0N_2 -1 N_3$, $\lambda_2 = 1N_1 - 1N_2 + 0N_3$, and $\lambda_3 = 0N_1 + 1N_2 - 1N_3$. 
All three of the $\b{p}$ vectors are linearly independent. Therefore, according to Theorem~\ref{thm:lin_independence} $\varphi_1$, $\varphi_2$, and $\varphi_3$ are in different equivalence classes and so any pair of these eigenfunctions can be used to solve for $\b{x}(t)$.
We confirm that these are eigenvalue-eigenfunction pairs of Eq.~\ref{eq:nonlinear_ex1} by checking that they satisfly Eq.~\ref{eq:lin_dyn_varphi_pde}.
\begin{align*}
    \nabla_{\b{x}} \varphi_1(\b{x})  \cdot  F(\b{x}) &=\lambda_1 \varphi_1(\b{x})\\
    \begin{bmatrix}
        \frac{1+y}{(1+x+y)^2} & \frac{-x}{(1+x+y)^2}
    \end{bmatrix} \begin{bmatrix}
        xy\\
        y^2-x-1
    \end{bmatrix}
   &=
    \frac{x}{1+x+y}\\
   \frac{x}{1+x+y} &= \frac{x}{1+x+y}\\
    &\implies  \varphi_1 \  \text{is an eigenfunction}.
\end{align*}

\begin{align*}
    \nabla_{\b{x}} \varphi_2(\b{x})  \cdot  F(\b{x}) &=\lambda_2 \varphi_2(\b{x})\\
    \begin{bmatrix}
        \frac{1-y}{(1+x-y)^2} & \frac{x}{(1+x-y)^2}
    \end{bmatrix} \begin{bmatrix}
        xy\\
        y^2-x-1
    \end{bmatrix}
   &=
    \frac{(-1)x}{1+x-y}\\
   \frac{-x}{1+x-y} &= \frac{-x}{1+x-y}\\
    &\implies  \varphi_2 \  \text{is an eigenfunction}.
\end{align*}
The dynamics on these mappings are linear, giving us solutions in the eigenfunction space,
\begin{align}
    \varphi_1(t) &= \varphi_1(0) e^{\lambda_1 t} = \varphi_1(x_0,y_0) e^{t}\\
    \varphi_2(t) &= \varphi_2(0) e^{\lambda_2 t} = \varphi_2(x_0,y_0) e^{-t}.
\end{align}
The initial conditions in the eigenfunction space can be found by mapping the initial conditions in the $(x,y)$ space, $(x_0,y_0)$, to the eigenfunction space,
\begin{align}
    \varphi_1(x_0,y_0) &= \frac{x_0}{1 + x_0 + y_0}\\
    \varphi_2(x_0,y_0) & = \frac{x_0}{1+x_0-y_0}.
\end{align}
We now need to map the solution back to the original $(x,y)$ space. We use the system Eqs.~\ref{eq:nonlin_ex1_g1} and \ref{eq:nonlin_ex1_g2} to solve for $(x, y)$. Notice that if these eigenfunctions are not independent we will not be able to map back to the $(x,y)$ space. This give us
\begin{align}
    x(\varphi_1,\varphi_2) &= \frac{2 \varphi_1 \varphi_2}{\varphi_1 + \varphi_2 - 2 \varphi_1 \varphi_2}\\
    y(\varphi_1,\varphi_2) &= \frac{-\varphi_1 + \varphi_2}{\varphi_1 + \varphi_2 - 2 \varphi_1 \varphi_2}.
\end{align}
Substituting the analytical solutions for $\varphi_1(t;x_0,y_0)$ and $\varphi_2(t;x_0,y_0)$ into the expressions for $x$ and $y$ gives us analytical solutions for $x$ and $y$,

\begin{align}
    x(t) &= \frac{2 \left( \frac{x_0}{1 + x_0 + y_0}\right) \left(\frac{x_0}{1+x_0-y_0} \right)}{\left( \frac{x_0}{1 + x_0 + y_0} \right) e^{t} + \left(\frac{x_0}{1+x_0-y_0} \right) e^{-t} - 2 \left( \frac{x_0}{1 + x_0 + y_0}\right) \left(\frac{x_0}{1+x_0-y_0} \right)}\\
    y(t) &= \frac{-\left(\frac{x_0}{1 + x_0 + y_0} \right) e^t + \left(\frac{x_0}{1+x_0-y_0} \right) e^{-t}}{\left(\frac{x_0}{1 + x_0 + y_0} \right) e^t + \left(\frac{x_0}{1+x_0-y_0} \right) e^{-t} - 2 \left(\frac{x_0}{1 + x_0 + y_0} \right) \left(\frac{x_0}{1+x_0-y_0} \right)}.
\end{align}
Notice that, similar to the linear case, the solution is constructed from combinations of exponentials originating from the linear eigenfunction solutions. Instead of linear combinations of exponentials, however, the solution is formed from a nonlinear combination of these exponentials.
Figure~\ref{fig:nonlinear_ex1}(b-c) shows the flow lines in the $(x,y)$ space projected onto the eigenfunctions, resulting in linear dynamics. The system $(\varphi_1,\varphi_2)$ has simple linear dynamics (Fig.~\ref{fig:nonlinear_ex1}(d)) which can be projected back to the original space (Fig.~\ref{fig:nonlinear_ex1}(e-f)). The analytical solution for $\b{x}(t)$ initially matches the numerical solution from ode45 (Fig.~\ref{fig:nonlinear_ex1}(g)), however, the error in the numerical solution accumulates over time (Fig.~\ref{fig:nonlinear_ex1}(h)). In addition to the accumulation of errors, ode45 cannot produce a numerical solution past the threshold of its integration tolerance. The analytical solution shows that the trajectories go to infinity and back in finite time, which is a phenomena that is unobservable using ode45. This is due to the cessation in integration when the integration tolerance is met (Fig.~\ref{fig:nonlinear_ex1}(g)).

\subsection{Nonlinear example 2}

\begin{figure}[t]
    \centering
    \includegraphics[width=0.9\linewidth]{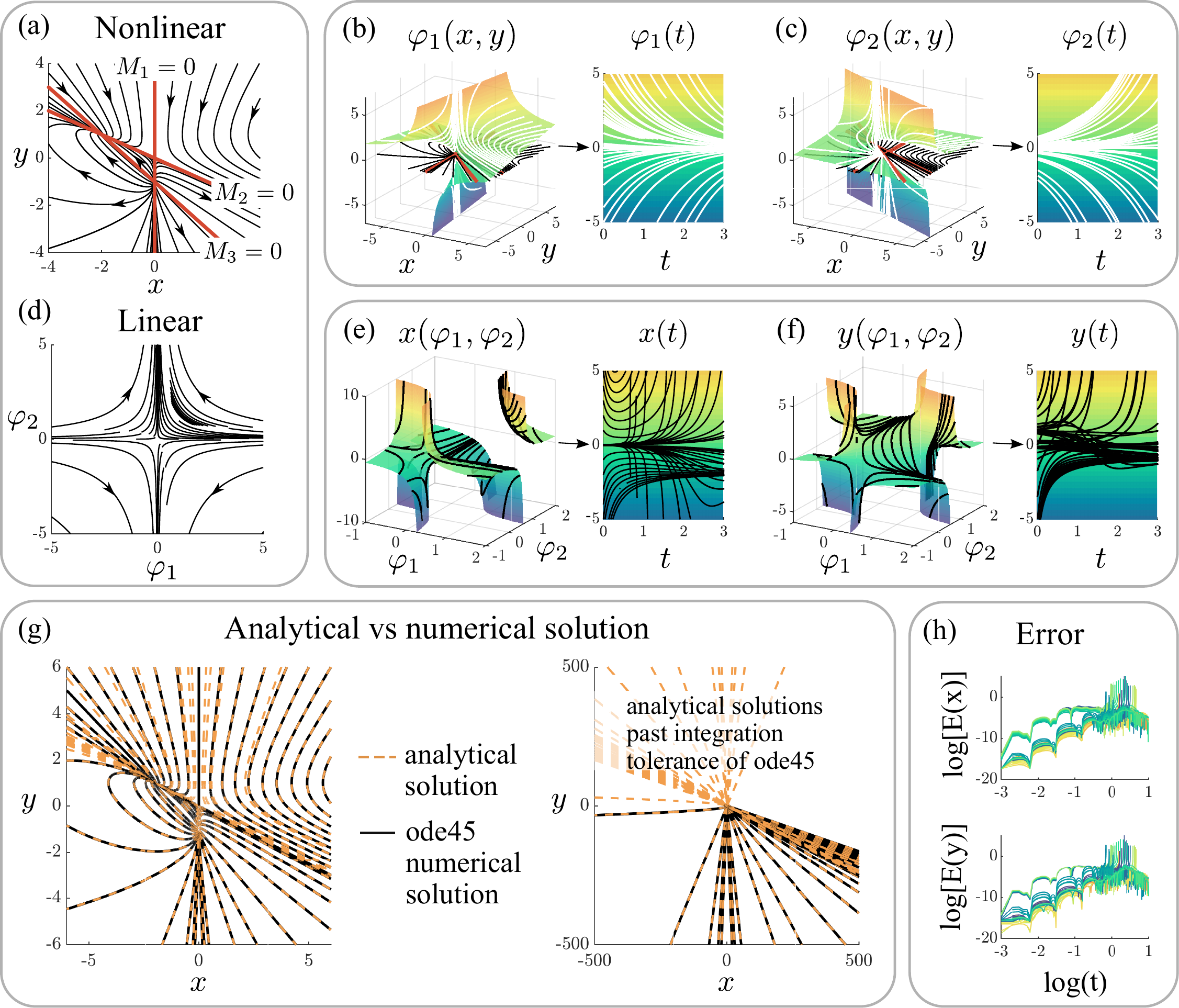}
    \caption{Dynamics in original space and eigenfunction space. (a) Phase plane for Eq.~\ref{eq:nonlinear_ex2} (b-c) Flow lines projected onto $\varphi_1$ and $\varphi_2$. (d) Phase plane of ($\varphi_1,\varphi_2$) (e-f) Flow lines of ($\varphi_1, \varphi_2$) mapped back to original variables ($x,y$) (g) Analytical solution compared to numerical solution produced using ode45. (h) Error in trajectories over time}
    \label{fig:nonlinear_ex2}
\end{figure}

Consider another nonlinear ODE that, as in the previous example, has three real linear invariant manifolds,
\begin{align} \label{eq:nonlinear_ex2}
\begin{split}
      \dot{x} &= x-xy\\
    \dot{y} &= -x-y-y^2  
\end{split}
\end{align}
with initial conditions $x(0) = x_0$ and $y(0) = y_0$.
The invariant manifold generating functions are $M_1 = x$, $M_2 = x+2y$, and $M_3 = 1+x+y$. The invariant manifolds that include the fixed points of the system are $\Lambda_1 = \{(x,y): M_1(x,y) = 0 \}$, $\Lambda_2 = \{(x,y): M_2(x,y) = 0 \}$, and $\Lambda_3 = \{(x,y): M_3(x,y) = 0 \}$ (Fig.~\ref{fig:nonlinear_ex2}(a)). 
We can confirm that the proposed $M$-functions are invariant manifolds of Eq.~\ref{eq:nonlinear_ex2} and solve for the $N$-functions by checking that they satisfy Eq.~\ref{eq:invar_man_req}.
\begin{align*}
    \frac{d}{dt}M_1(x,y) &= \frac{dx}{dt}=x-xy=x(1-y)=M_1 N_1\\
     \frac{d}{dt}M_2(x,y) &= \frac{d(x+2y)}{dt}=-x-xy-2y-y^2=(x+2y)(-1-y)=M_2 N_2\\
     \frac{d}{dt}M_3(x,y)&=\frac{d(1+x+y)}{dt}=-xy-y-y^2=(1+x+y)(-y)= M_3 N_3
\end{align*}
The $N$-functions corresponding to the $M$-functions are $N_1(x,y) = 1-y$, $N_2(x,y) = -1-y$, and $N_3(x,y) = -y$. Multiple differences of the $N$-functions result in constants,
\begin{alignat*}{2}
    \lambda_1 = N_3(x,y) - N_1(x,y) &= -y-(1-y) &&=-1\\
    \lambda_2 = N_3(x,y) - N_2(x,y) &= -y-(-1-y) &&=1\\
    \lambda_3 = N_1(x,y) - N_3(x,y) &= 1-y-(-y) &&= 1\\
    \lambda_4 = N_1(x,y) - N_2(x,y) &= 1-y-(-1-y) &&=2.
\end{alignat*}
Therefore, according to Theorem~\ref{thm:manifolds_general}, Eq.~\ref{eq:nonlinear_ex2} has eigenfunctions
\begin{alignat}{2}
    \varphi_1(x,y) &=\frac{M_3}{M_1} = \frac{1+x+y}{x}, \quad &&\lambda_1 = -1\\
    \varphi_2(x,y) &= \frac{M_3}{M_2}= \frac{1+x+y}{x+2y}, \quad &&\lambda_2 = 1\\
    \varphi_3(x,y) &= \frac{M_1}{M_3}= \frac{x}{1+x+y}, \quad &&\lambda_3 = 1\\
    \varphi_4(x,y) &= \frac{2 M_1}{M_2}= \frac{2x}{x+2y}, \quad &&\lambda_4 = 2.
\end{alignat}
The weighting vectors for these eigenfunctions are $\b{p}_1 = [-1 \ 0 \ 1]^T$, $\b{p}_2 = [0 \ -1 \ \ 1]^T$, $\b{p}_3 = [1 \ \ 0 \ -1]^T$, and $\b{p}_4 = [1 \ -1 \ \ 0]^T$ since $\lambda_1 = -1N_1 + 0 N_2 + 1 N_3$, $\lambda_2 = 0N_1 - 1N_2 + 1 N_3$, $\lambda_3 = 1N_1 + 0N_2 - 1 N_3$, and $\lambda_4 = 1N_1 - 1N_2 +0 N_3$.
Not all pairs of vectors are linearly independent. For example, $\b{p}_1$ and $\b{p}_3$ are linearly dependent, meaning that the eigenfunction pair ($\varphi_1, \varphi_3$) cannot be used to solve for $\b{x}(t)$. $\b{p}_1$ and $\b{p}_2$ are linearly independent; we therefore select the corresponding pair of eigenfunctions ($\varphi_1, \varphi_2$) to solve for $\b{x}(t)$.
Using $\varphi_1(x,y)$ and $\varphi_2(x,y)$ to solve for $x$ and $y$ gives us
\begin{align}
    x(\varphi_1,\varphi_2) = \frac{2\varphi_2}{-\varphi_1-\varphi_2+2\varphi_1\varphi_2}\\
    y(\varphi_1,\varphi_2) =\frac{\varphi_1 - \varphi_2}{-\varphi_1 - \varphi_2 + 2\varphi_1 \varphi_2}.
\end{align}
Substituting the solutions $\varphi_1(t) = \varphi_1(x_0,y_0)e^{\lambda_1 t}$ and $\varphi_2(t) = \varphi_2(x_0,y_0)e^{\lambda_2 t}$ for $\varphi_1$ and $\varphi_2$ in the formulas for $x$ and $y$ gives us
\begin{align}
    x(t) &= \frac{2\left(\frac{1+x_0+y_0}{x_0+2y_0} \right) e^{t}}{-\left(\frac{1+x_0+y_0}{x_0} \right)e^{-t}-\left(\frac{1+x_0+y_0}{x_0+2y_0} \right) e^{t}+2\left(\frac{1+x_0+y_0}{x_0} \right) \left(\frac{1+x_0+y_0}{x_0+2y_0} \right)}\\
     y(t) &= \frac{\left(\frac{1+x_0+y_0}{x_0} \right)e^{-t} - \left(\frac{1+x_0+y_0}{x_0+2y_0} \right) e^{t} }{-\left(\frac{1+x_0+y_0}{x_0} \right)e^{-t}-\left(\frac{1+x_0+y_0}{x_0+2y_0} \right) e^{t}+2\left(\frac{1+x_0+y_0}{x_0} \right) \left(\frac{1+x_0+y_0}{x_0+2y_0} \right)}.
\end{align}
Simplifying we get
\begin{align}
    x(t) &= \frac{2\left(\frac{1+x_0+y_0}{x_0+2y_0} \right) e^{t}}{-\left(\frac{1+x_0+y_0}{x_0} \right)e^{-t}-\left(\frac{1+x_0+y_0}{x_0+2y_0} \right) e^{t}+\frac{2(1+x_0+y_0)^2}{x_0(x_0+2y_0)} }\\
     y(t) &= \frac{\left(\frac{1+x_0+y_0}{x_0} \right)e^{-t} - \left(\frac{1+x_0+y_0}{x_0+2y_0} \right) e^{t} }{-\left(\frac{1+x_0+y_0}{x_0} \right)e^{-t}-\left(\frac{1+x_0+y_0}{x_0+2y_0} \right) e^{t}+\frac{2(1+x_0+y_0)^2}{x_0(x_0+2y_0)}}.
\end{align}
Figure~\ref{fig:nonlinear_ex2}(b-c) shows the flow of Eq.~\ref{eq:nonlinear_ex2} in the $(x,y)$ space projected onto the eigenfunctions which have linear dynamics. The flow in the $(\varphi_1, \varphi_2)$ space can be solved and projected back onto the original variables (Fig.~\ref{fig:nonlinear_ex2}(d-f)). The numerical solutions to $\b{x}(t)$ match the analytical solutions initially, however, error does accumulate; when the integration tolerance of ode45 is met the numerical solution ends (Fig.~\ref{fig:nonlinear_ex2}(g-h)).

\subsection{Nonlinear example 3 --- quasi-2D system}

This next example connects the 1-dimensional and 2-dimensional cases and clarifies the connection between the system's invariant manifolds and the eigenfunctions,
\begin{align}
\begin{split}
       \dot{x} &= x^2 - x\\
    \dot{y} &= xy - 2y 
\end{split}
\end{align}
with initial conditions $x(0) = x_0$ and $y(0) = y_0$. The surfaces that generate invariant manifolds that include the fixed points along their zero level-set are $M_1 = x$, $M_2 = x-1$, and $M_3 = y$; these invariant manifolds are $\Lambda_1 = \{(x,y): M_1(x,y)=0 \}$, $\Lambda_2 = \{(x,y): M_2(x,y)=0 \}$, and $\Lambda_3 = \{(x,y): M_3(x,y)=0 \}$. We show that these equations satisfy Eq.~\ref{eq:invar_man_req} and solve for the $N$-functions.
\begin{align*}
    \frac{d}{dt}M_1(x,y) &= \frac{dx}{dt} = x^2-x = x(x-1) = M_1 N_1 \\
    \frac{d}{dt}M_2(x,y) &= \frac{d(x-1)}{dt}= x^2-x = (x-1)x = M_2 N_2\\
    \frac{d}{dt}M_3(x,y) &= \frac{dy}{dt} = xy-2y = y(x-2) = M_3 N_3
\end{align*}
The $N$-functions corresponding to the $M$-functions are $N_1(x,y) = x-1$, $N_2(x,y) = x$, and $N_3(x,y) = x-2$. Multiple differences of the $N$-functions result in constants,
\begin{alignat*}{2}
    \lambda_1 = N_2(x,y) - N_1(x,y) &= x-(x-1) &&=1\\
   \lambda_2 = N_1(x,y) - N_3(x,y) &= x-1-(x-2) &&=1\\
   \lambda_3 = N_2(x,y) - N_3(x,y) &= x-(x-2) &&= 2.
\end{alignat*}
According to Theorem~\ref{thm:manifolds_general}, we can construct eigenfunctions from the $M$-function quotients,
\begin{alignat}{3}
    \varphi_1(x,y) &= -&&\frac{M_2}{M_1} = \frac{1-x}{x}, \quad &&\lambda_1 = 1 \label{eq:nonlinear_ex3_g1}\\
    \varphi_2(x,y) &= &&\frac{M_1}{M_3} = \frac{x}{y}, \quad &&\lambda_2 = 1.
\end{alignat}
The corresponding weight vectors are $\b{p}_1 = [-1 \ \ 1 \ \ 0]$ and $\b{p}_2 = [1 \ \ 0 \ -1]$. $\b{p}_1 $ and $\b{p}_2$ are linearly independent, meaning we may use $\varphi_1$ and $\varphi_2$ to solve for $\b{x}(t)$ according to Theorem~\ref{thm:lin_independence}.
Solving for $x$ and $y$ as functions of $\varphi_1$ and $\varphi_2$ and substituting the solutions for $\varphi_1(t)$ and $\varphi_2(t)$ gives us
\begin{align}
    x(t) &= \frac{1}{1+\varphi_1(t)} = \frac{1}{1+\frac{1-x_0}{x_0}e^t}\\
    y(t) &= \frac{1}{\varphi_2(t)(1+\varphi_1(t))} = \frac{y_0}{x_0e^t + (1-x_0)e^{2t}}.
\end{align}
Notice that the dynamics of $x$ and the solution for $x$ does not depend on $y$, making it a 1-dimensional ODE. Its eigenfunction, Eq.~\ref{eq:nonlinear_ex3_g1}, has a zero at $x=1$ and a singularity at $x=0$. Now when $x$ is viewed as the first variable in a 2-dimensional ODE, the first eigenfunction is now zero not only at a single point, $x=1$, but along the entire curve $x=1$ in the $(x,y)$ plane. Likewise its singularity now no longer exists only at a single point, $x=0$, but along the entire curve $x=0$ in the $(x,y)$ plane. This example demonstrates how zero and singular level-sets of eigenfunctions in $\mathds{R}^2$ are natural extensions of zero and singular values of eigenfunctions in $\mathds{R}$.

\subsection{Nonlinear example 4 --- quadratic invariant manifolds}

\begin{figure}[t]
    \centering
    \includegraphics[width=0.9\linewidth]{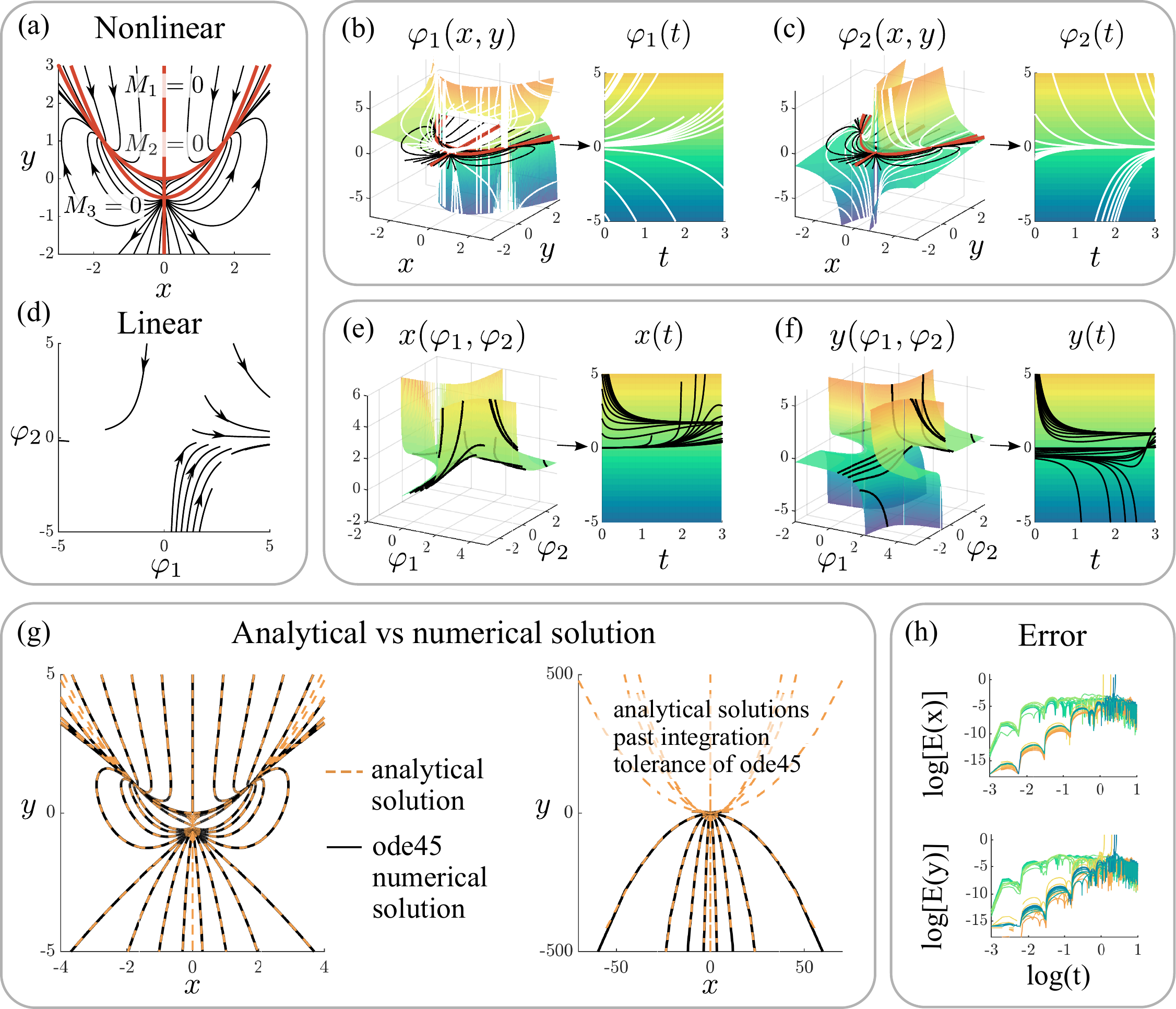}
    \caption{Dynamics in original space and eigenfunction space (a) Phase plane for Eq.~\ref{eq:nonlinear_ex4} (b-c) Flow lines projected onto $\varphi_1$ and $\varphi_2$ (d) Phase plane of ($\varphi_1,\varphi_2$) (e-f) Flow lines of ($\varphi_1, \varphi_2$) mapped back to original variables ($x,y$) (g) Analytical solution compared to numerical solution produced using ode45 (h) Error in trajectories over time}
    \label{fig:nonlinear_ex4}
\end{figure}

The examples we have considered thus far all have linear invariant manifolds. We now consider a system with quadratic manifolds intersecting the system's fixed points,
\begin{align} \label{eq:nonlinear_ex4}
\begin{split}
    \dot{x} &= x-xy\\
    \dot{y} &= -y+x^2-2y^2
\end{split}
\end{align}
with initial conditions $x(0) = x_0$ and $y(0) = y_0$. The invariant manifold generating functions for Eq.~\ref{eq:nonlinear_ex4} are $M_1 = x$, $M_2 = x^2-3y$, and $M_3 = 1-x^2+2y$; the invariant manifolds that go through the system's fixed points are $\Lambda_1 = \{(x,y): M_1(x,y)=0  \}$, $\Lambda_2 = \{(x,y): M_1(x,y)=0 \}$, and $\Lambda_3 = \{(x,y): M_1(x,y)=0 \}$. We check that these functions satisfy Eq.~\ref{eq:invar_man_req} and solve for the $N$-functions.
\begin{align*}
  \frac{d}{dt}M_1(x,y) &= \frac{dx}{dt} = x(1-y) = M_1 N_1\\
  \frac{d}{dt}M_2(x,y) &= \frac{d(x^2-3y)}{dt} =(x^2-3y)(-1-2y) = M_2 N_2\\
  \frac{d}{dt}M_3(x,y) &= \frac{d(1-x^2+2y)}{dt} = (1-x^2+2y)(-2y) = M_3 N_3
\end{align*}
The $N$-functions corresponding to the $M$-functions are $N_1(x,y) = 1-y$, $N_2(x,y) = -1-2y$, and $N_3(x,y) = -2y$. The following differences in the $N$-functions result in constants:
\begin{align*}
    \lambda_1 = &N_3(x,y) - N_2(x,y) = -2y - (-1-2y) = 1\\
    \lambda_2 = &N_3(x,y) -2 N_1(x,y) = -2y-(1-y)-(1-y) = -2.
\end{align*}
According to Theorem~\ref{thm:manifolds_general}, we can construct the following eigenfunctions from the $M$-functions:
\begin{alignat*}{2}
    \varphi_1(x,y) &= \frac{-3M_3}{M_2}= \frac{-3(1-x^2+2y)}{x^2-3y}, \quad &&\lambda_1 = 1\\
    \varphi_2(x,y) &=\frac{M_3}{M_1^2}= \frac{1-x^2+2y}{x^2}, \quad &&\lambda_2 = -2.
\end{alignat*}
The weight vectors for $\varphi_1$ and $\varphi_2$, $\b{p}_1 = [0 \ -1 \ \ 1]^T$ and $\b{p}_2 = [2 \ \ 0 \ \ 1]^T$, are linearly independent, therefore $(\varphi_1, \varphi_2)$ can be used to solve for $\b{x}(t)$.
Solving for $x$ and $y$ gives us two solutions this time,
\begin{align}
    x &= \frac{\pm \sqrt{3 \varphi_1}}{\sqrt{\varphi_1-6\varphi_2 + 3 \varphi_1 \varphi_2}}\\
    y &= \frac{\varphi_1 + 3\varphi_2}{\varphi_1 - 6\varphi_2 + 3 \varphi_1 \varphi_2}.
\end{align}
The correct sign for $x$ is resolved by taking the sign of the initial condition, $x_0$, resulting in the solution
\begin{align}
    x(t) &= \frac{\sign{(x_0)} \sqrt{ \frac{9(-1+x_0^2-2y_0)}{x_0^2-3y_0}e^t}}{\sqrt{\frac{3(-1+x_0^2-2y_0)}{x_0^2-3y_0}e^t-\frac{6(1-x_0^2+2y_0)}{x_0^2}e^{-2t} +  \frac{9(-1+x_0^2-2y_0)(1-x_0^2+2y_0)}{(x_0^2-3y_0)x_0^2} e^{-t}}}\\
    y(t) &= \frac{\frac{3(-1+x_0^2-2y_0)}{x_0^2-3y_0}e^t + \frac{3(1-x_0^2+2y_0)}{x_0^2}e^{-2t}}{\frac{3(-1+x_0^2-2y_0)}{x_0^2-3y_0}e^t-\frac{6(1-x_0^2+2y_0)}{x_0^2}e^{-2t} +  \frac{9(-1+x_0^2-2y_0)(1-x_0^2+2y_0)}{(x_0^2-3y_0)x_0^2} e^{-t}}.
\end{align}
Figure~\ref{fig:nonlinear_ex4}(a) shows the flow lines of Eq.~\ref{eq:nonlinear_ex4} and the system's quadratic invariant manifolds. The nonlinear trajectories are projected onto the eigenfunction space, resulting in linear dynamics (Fig.~\ref{fig:nonlinear_ex4}(b-c)). The linear system can be solved and the trajectories projected back to the original space (Fig.~\ref{fig:nonlinear_ex4}(d-f)). The numerical solution matches the analytical solution initially with growing error over time (Fig.~\ref{fig:nonlinear_ex4}(g-h)).

\subsection{Nonlinear example 5}

Consider the system with three linear invariant manifolds,
\begin{align}
\begin{split}
    \dot{x} &= x-y-x^2\\
    \dot{y} &= -x-y-xy.
\end{split}
\end{align}
The invariant manifolds going through the fixed points occur along $y = (1+\sqrt{2})x$, $y = (1-\sqrt{2})x$, and $y = x-2$.  The invariant manifold generating functions are $M_1 = y- (1+\sqrt{2})x$, $M_2 = y-(1-\sqrt{2})x$, and $M_3 = y-x+2$; the invariant manifolds are $\Lambda_1 = \{(x,y): M_1(x,y)=0  \}$, $\Lambda_2 = \{(x,y): M_1(x,y)=0 \}$, and $\Lambda_3 = \{(x,y): M_1(x,y)=0 \}$. We use Eq.~\ref{eq:invar_man_req} to solve for the corresponding $N$-functions.
\begin{align*}
  \frac{d}{dt}M_1(x,y) &= (y-(1+\sqrt{2})x)(\sqrt{2}-x) = M_1 N_1\\
  \frac{d}{dt}M_2(x,y) &= (y-(1-\sqrt{2})x)(-\sqrt{2}-x) = M_2 N_2\\
  \frac{d}{dt}M_3(x,y) &= (y-x+2)(-x) = M_3 N_3
\end{align*}
The $N$-functions are $N_1(x,y) = \sqrt{2}-x$, $N_2(x,y) = -\sqrt{2}-x$, and $N_3(x,y) = -x$. The following linear combinations of $N$-functions result in constants: $\lambda_1 = N_3(x,y) - N_2(x,y) = \sqrt{2}$, $\lambda_2  = N_3(x,y) - N_1(x,y) = -\sqrt{2}$, and $\lambda_3 = N_1(x,y)-N_2(x,y) = 2 \sqrt{2}$.
We construct the eigenfunctions 
\begin{align}
    \varphi_1(x,y) &= \frac{M_3}{2 M_2} = \frac{y-x+2}{2(y-(1-\sqrt{2})x)}, \quad \lambda_1 = \sqrt{2}\\
    \varphi_2(x,y) &= \frac{M_3}{2 M_1} = \frac{y-x+2}{2(y-(1+\sqrt{2})x)}, \quad \lambda_2 = -\sqrt{2}.
\end{align}
The weight vectors $\b{p}_1= [0 \ -1 \ \ 1]^T$ and $\b{p}_2= [-1 \ \ 0 \ \ 1]^T$ are linearly independent, meaning that we can solve for $\b{x}(t)$ using ($\varphi_1, \varphi_2$).
Solving the nonlinear system of equations $(\varphi_1, \varphi_2)$ for $(x,y)$ gives us
\begin{align}
    x &= \frac{-\sqrt{2}(\varphi_1-\varphi_2)}{-\varphi_1-\varphi_2+4\varphi_1\varphi_2}\\
    y &= \frac{(2-\sqrt{2})\varphi_1 + (2+\sqrt{2})\varphi_2}{-\varphi_1-\varphi_2+4\varphi_1 \varphi_2}.
\end{align}
Substituting the analytical solutions for $\varphi_1$ and $\varphi_2$ and the initial condition ($x_0, y_0$) gives us
\begin{align*}
    x(t) &= \frac{-\sqrt{2}\left(\frac{y_0-x_0+2}{2(y_0-(1-\sqrt{2})x_0)}e^{\sqrt{2}t}-\frac{y_0-x_0+2}{2(y_0-(1+\sqrt{2})x_0)}e^{-\sqrt{2}t} \right)}{-\frac{y_0-x_0+2}{2(y_0-(1-\sqrt{2})x_0)}e^{\sqrt{2}t}- \frac{y_0-x_0+2}{2(y_0-(1+\sqrt{2})x_0)}e^{-\sqrt{2}t} + \frac{(y_0-x_0+2)^2}{(y_0-(1-\sqrt{2})x_0)(y_0-(1+\sqrt{2})x_0)}}\\
    y(t) &= \frac{(2-\sqrt{2})\frac{y_0-x_0+2}{2(y_0-(1-\sqrt{2})x_0)}e^{\sqrt{2}t} + (2+\sqrt{2})\frac{y_0-x_0+2}{2(y_0-(1+\sqrt{2})x_0)}e^{-\sqrt{2}t}}{-\frac{y_0-x_0+2}{2(y_0-(1-\sqrt{2})x_0)}e^{\sqrt{2}t}- \frac{y_0-x_0+2}{2(y_0-(1+\sqrt{2})x_0)}e^{-\sqrt{2}t} + \frac{(y_0-x_0+2)^2}{(y_0-(1-\sqrt{2})x_0)(y_0-(1+\sqrt{2})x_0)}}.
\end{align*}

\subsection{Eigenfunctions from invariant manifolds in previous work}

Eigenfunctions constructed from a system's invariant manifolds can be found in previous work \cite{brunton_koopman_2016, bollt_matching_2018}.
In Ref.~\cite{brunton_koopman_2016} the nonlinear ODE
\begin{align}
\begin{split}
    \dot{x} &= -0.05 x\\
    \dot{y} &= -(y-x^2)
\end{split}
\end{align}
has eigenfunctions $\varphi_1 = M_1 = x$ and $\varphi_2 = M_2 = y-\frac{10}{9}x^2$ with corresponding eigenvalues $\lambda_1 = N_1 = -0.05$ and $\lambda_2 = N_2 = -1$. Both of these eigenfunctions are invariant manifold generating functions with zero level-sets that go through the fixed point at the origin. Eigenfunctions could be constructed from individual $M$-functions because the corresponding $N$-functions are constants, making them eigenvalues.

In Ref.~\cite{bollt_matching_2018} the nonlinear ODE
\begin{align}
\begin{split}
    \frac{dx}{dt} &= -2 y(x^2 - y - 2xy^2 + y^4) +  (x + 4 x^2 y - y^2 - 8 x y^3 + 4 y^5)\\
    \frac{dy}{dt} &= 2  (x - y^2)^2 - (x^2 - y - 2 x y^2 + y^4)
\end{split} \label{eq:2D_ex7_dxdt}
\end{align}
was found to have eigenfunctions $\varphi_1 = M_1 = x - y^2$ and $\varphi_2 = M_2 = -x^2 +y + 2xy^2 - y^4$ with corresponding eigenvalues $\lambda_1 = N_1 = 1$ and $\lambda_2 = N_2 =1$. Once again, these eigenfunctions are $M$-functions whose zero level-sets describe invariant manifolds that go through the system's fixed points. The corresponding $N$-functions are constants, once again allowing the $M$-functions to be eigenfunctions.

\section{Discussion}\label{sec:discussion}

We present a novel method for solving nonlinear ordinary differential equations. The crux of our method relies on finding explicit, globally-valid expressions for eigenfunctions by composing invariant manifold generating functions in the manner outlined in Theorems~\ref{thm:manifolds_general} and \ref{thm:manifolds_general2}. 
We demonstrate our method by finding analytical solutions for 2-dimensional nonlinear ODEs that were previously analytically intractable.
Currently, this method can only be used to solve nonlinear autonomous ODEs that have real fixed points and real invariant manifolds. Furthermore, according to Theorem~\ref{thm:manifolds_general2}, the variables in the $N$-functions must be cancelable, producing a constant.
These conditions are restrictive and few nonlinear ODEs meet this criteria. However, for those that do, we have shown that a method for obtaining an analytical solution exists.

\subsection{Complex-valued eigenfunctions}

In the 1-dimensional case we saw that mapping an ODE in $\mathds{R}$ to a space with linear dynamics sometimes requires the eigenfunction to be complex-valued, $\varphi: \mathds{R} \rightarrow \mathds{C}$. This is unequivocally the case when the ODE contains complex fixed points, Eqs.~\ref{eq:oneD_ex22_dxdt}, \ref{eq:oneD_ex3_dxdt}, but also can occur when the ODE contains only real fixed points, Eq.~\ref{eq:oneD_ex2_dxdt}. Just as in the 1-dimensional case, we expect that for 2-dimensional nonlinear ODEs, complex-valued eigenfunctions will sometimes be necessary to obtain linear dynamics, $\varphi: \mathds{R}^2 \rightarrow \mathds{C}$. 
Eigenfunctions may be able to be constructed from complex-valued invariant manifolds and manifolds eminating from complex-valued fixed points in 2-dimensional ODEs.
We already have an example of this in the linear case. Linear systems in $\mathds{R}^2$ that are spiral sinks or sources do not have real invariant manifolds, and yet these systems can be solved using complex-valued eigenvalues and eigenfunctions, $\varphi: \mathds{R}^2 \rightarrow \mathds{C}$, constructed from complex-valued invariant manifolds. We aim to investigate how to find and construct complex-valued invariant manifolds as well as how they may be composed to produce eigenfunctions.

\subsection{Extension to higher dimensions}

For the examples given in this work we did not consider systems higher than two dimensions. However, Theorems~\ref{thm:manifolds_general} and \ref{thm:manifolds_general2} are generally applicable to nonlinear ODEs in $\mathds{R}^n$ and so can be used to construct eigenfunctions for nonlinear systems of any dimension. Finding invariant manifolds for ODEs with more than two variables may prove to be more difficult than finding invariant manifolds for 2-dimensional ODEs.  Nonetheless, if $M$-functions can be found for an ODE in $\mathds{R}^n$ and enough independent eigenfunctions constructed from these $M$-functions, then the $n$-dimensional ODE may be solved using a similar procedure to that outlined in Algorithm~\ref{alg:koopman_method}.

\subsection{Ramifications for data-driven approximations of eigenfunctions}

Polynomials or other smooth continuous functions are often used as basis functions for the data-driven discovery of eigenfunctions \cite{williams_data-driven_2015,kutz_dynamic_2016, kaiser_data-driven_2021, brunton_koopman_2016, lusch_deep_2018, li_extended_2017}. Choosing smooth and continuous basis functions for data-driven discovery assumes the resulting eigenfunctions, linearly composed from the basis functions, are smooth and continuous.
Many of the eigenfunctions found in this work are not continuous; it is impossible to generate continuous eigenfunctions that are globally valid for some ODEs. In these instances, forcing the approximate eigenfunctions to be continuous by using smooth continuous basis functions would result in a poor approximation, particularly around discontinuities.

Deriving globally valid eigenfunctions or exact expressions may not be necessary depending on the desired use for the eigenfunctions.
Nonlinear dynamics can be linearized for certain regions around fixed points with continuous approximate eigenfunctions \cite{lan_linearization_2013, mauroy_global_2016, page_koopman_2019, lusch_deep_2018}; these methods, in essence, create approximate local eigenfunctions for different regions of the domain that extend farther and are more accurate than the simple approximate eigenfunctions obtained through linearization using the Jacobian \cite{strogatz_nonlinear_2016, brunton_koopman_2016, kaiser_data-driven_2021}. DMD methods are often used to approximate eigenfunctions and produce different subspaces of observables for the region around each fixed point in the dynamical system \cite{kutz_dynamic_2016, page_koopman_2019, kaiser_data-driven_2021, bollt_matching_2018}. Our method, in contrast, produces a single set of observables for the entire space. If approximate solutions and control are desired only for a region around a particular fixed point, then using smooth functions to approximate eigenfunctions for a particular region may be suitable.
Otherwise, data-driven discovery of eigenfunctions that allow for the discovery of rational eigenfunctions and eigenfunctions that have discontinuities may be useful in finding globally valid eigenfunctions and more exact and simpler mappings between spaces with nonlinear and linear dynamics.

\subsection{Comparison to the method of characteristics solution for eigenfunctions}

The eigenfunctions of a nonlinear ODE, Eq.~\ref{eq:general_ode}, are solutions to Eq.~\ref{eq:lin_dyn_varphi_pde}, a linear PDE which is solvable via the method of characteristics \cite{bollt_matching_2018, bollt_geometric_2021, McOwenRobertC2003}. 
The method of characteristics propagates an initial condition curve, $\Gamma$, forward in time creating an integral surface that is a solution to the PDE \cite{McOwenRobertC2003}. In general, the method of characteristics does not produce simple, closed-form solutions.
The $\Gamma$ chosen to be the initial condition curve must traverse the flow. As there is an infinite number of curves that can be chosen for the initial condition, there is an infinite number of resulting eigenfunctions \cite{bollt_matching_2018}. Most of the transverse flows chosen to be initial conditions will result in eigenfunctions that do not have simple expressions. 
The eigenfunctions with simple analytical expressions found in this manuscript correspond to a method of characteristics solution for a specially chosen initial conditions curve $\Gamma$. We have not solved for these curves, and it remains an open question as to how to select $\Gamma$ such that the resulting eigenfunction will have a simple closed-form expression. This line of questioning is not addressed in this paper and is an avenue for further investigation.

\section{Conclusion}\label{sec:conclusion}

We solve nonlinear ordinary differential equations by finding Koopman eigenfunctions, from which we can construct solutions. Koopman eigenfunctions are generally difficult to find; we outline how Koopman eigenfunctions can be constructed from a system's invariant manifolds when certain conditions are met. We provide formulas for sets of eigenvalue-eigenfunction pairs which can then be used to solve the nonlinear system. We demonstrate the Koopman eigenfunction method of solving nonlinear ODEs on 1-dimensional and 2-dimensional ODEs. We use this method to solve nonlinear ODEs in $\mathds{R}^2$ that thus far had no known analytical solution. We suggest ways this method may be extended to solve a larger class of nonlinear ODEs and compare our method to methods for data-driven discovery of eigenfunctions as well as the method of characteristics, used to solve linear PDEs. We hope to make this method generalizable to solve a larger class of ODEs by improving methods of finding invariant manifolds and constructing eigenfunctions.

\subsection*{Acknowledgments}

MM acknowledges support from the National Science Foundation Mathematical Sciences Postdoctoral Research Fellowship (award no. 2103239).
JNK acknowledges funding support from the Air Force Office of Scientific Research (AFOSR FA9550- 19-1-0386) and from the National Science Foundation AI Institute in Dynamic Systems grant number 2112085. 
We would like to thank Marko Budišić for his valuable comments on our manuscript.



\end{document}